\newtheorem{theorem}{Theorem}[section]
\newtheorem{proposition}[theorem]{Proposition}
\newtheorem{lemma}[theorem]{Lemma}
\theoremstyle{definition}
\newtheorem{definition}[theorem]{Definition}
\newtheorem{example}[theorem]{Example}
\theoremstyle{remark}
\newtheorem{remark}[theorem]{Remark}
\numberwithin{equation}{section}
\newcommand{\nky}{\mathbb{A}}
\newcommand{\Fa}{{\rm (F1)}}
\newcommand{\Fb}{{\rm (F2)}}
\newcommand{\Leqq}{\mathrel{\mathpalette\gl@align<}}
\newcommand{\Geqq}{\mathrel{\mathpalette\gl@align>}}
\begin{document}
\title[A class of nowhere differentiable functions]
{A class of nowhere differentiable functions
satisfying some concavity-type estimate}

\author[Y.~Fujita]{Yasuhiro Fujita}
\address{Department of Mathematics, University of Toyama,
3190 Gofuku, Toyama-shi, Toyama 930-8555, Japan}
\email{yfujita@sci.u-toyama.ac.jp}
\thanks{The first author is supported in part by JSPS KAKENHI  
Nos.~15K04949 and 18K03360}

\author[N.~Hamamuki]{Nao Hamamuki}
\address{Department of Mathematics, Hokkaido University,
Kita 10, Nishi 8, Kita-Ku, Sapporo, Hokkaido, 060-0810, Japan}
\email{hnao@math.sci.hokudai.ac.jp}
\thanks{The second author is supported in part by JSPS KAKENHI
No.~16K17621}

\author[A.~Siconolfi]{Antonio Siconolfi}
\address{Department of Mathematics, Sapienza Universit\`a di Roma, Piazzale Aldo Moro 5, 00185 Roma, Italy}
\email{siconolf@mat.uniroma1.it}
\thanks{}

\author[N.~Yamaguchi]{Norikazu Yamaguchi}
\address{Faculty of Human Development, University of Toyama,
3190 Gofuku, Toyama-shi, Toyama 930-8555, Japan}
\email{norikazu@edu.u-toyama.ac.jp}
\thanks{}

\subjclass[2010]{Primary26A27, 26A99; Secondary 39B22}

\keywords{Geometric inequality,
 nowhere differentiable functions, the Takagi function, inf-convolution}

\date{\today}

\begin{abstract}
In this paper, we introduce and investigate a class $\mathcal{P}$
of continuous and periodic functions on $\mathbb{R}$.
The class $\mathcal{P}$ is defined so that
second-order central differences of a function
satisfy some concavity-type estimate.
Although this definition seems to be independent of nowhere differentiable character,
it turns out that each function in $\mathcal{P}$ is nowhere differentiable.
The class $\mathcal{P}$ naturally appear
from both a geometrical viewpoint and an analytic viewpoint.
In fact, we prove that a function belongs to $\mathcal{P}$
if and only if some geometrical inequality holds for a family of parabolas with vertexes on this function.
As its application, we study the behavior of the Hamilton--Jacobi flow
starting from a function in $\mathcal{P}$.
A connection between $\mathcal{P}$
and some functional series is also investigated.
In terms of second-order central differences,
we give a necessary and sufficient condition
so that a function given by the series belongs to $\mathcal{P}$.
This enables us to construct a large number of examples
of functions in $\mathcal{P}$ through an explicit formula.
\end{abstract}

\maketitle

\section{Introduction}
Let us denote by $C_p(\mathbb{R})$
the set of all continuous and periodic functions
$f:\mathbb{R} \to \mathbb{R}$ with period $1$ and $f(0)=0$.
Throughout this paper, we assume that $r$ is an integer such that $r\geq 2$.
Let $\mathbb{N}_0:=\mathbb{N} \cup \{ 0 \}$.

  Our aim of this paper is to introduce and investigate the class $\mathcal{P}$  of functions in $C_p(\mathbb{R})$ defined as follows: Given  a function $f \in C_p(\mathbb{R})$, we consider, for each
$(n,k,y) \in \mathbb{N}_0\times \mathbb {Z}\times (0,1)$,
the first-order forward and backward differences of $f$
at $\frac{k+y}{r^n}$ defined,
respectively,  by
\begin{equation}
 \delta^+_{n,k}(y;f)
   =\frac{f\bigl(\frac{k+1}{r^n}\bigr)-f\bigl(\frac{k+y}{r^n}\bigr)}{\frac{1-y}{r^n}}, \quad
  \delta^-_{n,k}(y;f)
   =\frac{f\bigl(\frac{k+y}{r^n}\bigr)-f\bigl(\frac{k}{r^n}\bigr)}{\frac{y}{r^n}}.
   \label{eqn:22}
\end{equation}

\begin{definition}
\label{defn:21}
Let $c>0$ be a given constant.
A function $f \in C_p({\mathbb R})$ belongs to ${\mathcal P}_c$ if
\begin{equation}
 \delta^+_{n,k}(y;f)-\delta^-_{n,k}(y;f) \leq -c
  \label{eqn:23e}
\end{equation}
for all $(n,k,y) \in \mathbb {N}_0\times \mathbb {Z}\times(0,1)$.
We use the notation $\mathcal{P}=\bigcup_{c>0} \mathcal{P}_c$.
Note that both ${\mathcal P}_c$ and ${\mathcal P}$ depend on
the choice of $r$ though we omit it in our notation.
\end{definition}

Inequality \eqref{eqn:23e} can be  written equivalently as
\begin{equation}
\Delta_{n,k}(y;f)\leq -2c r^n,
\label{eqn:23d}
\end{equation}where $\Delta_{n,k}(y;f)$ is the second-order central difference defined by
\begin{equation}
\Delta_{n,k}(y;f)=2r^n(\delta^+_{n,k}(y;f)-\delta^-_{n,k}(y;f)).
\label{eqn:21b}
\end{equation}
It is well-known that if a function $f:\mathbb{R} \to \mathbb{R}$
is concave and has the second derivative in some interval $I$,
then $f'' \leq 0$ in $I$.
Even if $f$ is not twice differentiable,
a discrete version of the estimate $\Delta_{n,k}(y,f) \leq 0$ still holds.
Thus, the condition \eqref{eqn:23d} can be regarded
as a concavity-type estimate for $f$.
Our definition of $\mathcal{P}$ requires a function
to have the second-order differences which tend to $-\infty$
in the prescribed rate
as $n \to \infty$.

Although Definition \ref{defn:21} seems to be independent
of nowhere differentiable character,
it turns out that each function in $\mathcal{P}$ is nowhere differentiable.
This shows that our concavity-type estimate \eqref{eqn:23d}
is significantly different from a usual concavity
since any concave function is twice differentiable almost everywhere.

We have two reasons to introduce and investigate the class $\mathcal{P}$.
The first reason comes from a geometrical viewpoint.
We show that each function in $\mathcal{P}$
has a geometrical characterization stated as follows:
For any given function
$f\in C_p({\mathbb R})$, let $\{q_f(t,x;z)\}_{z\in{\mathbb R}}$ be the family of parabolas  defined by
\begin{equation}
 q_f(t,x;z)=f(z)+\frac{1}{2t}(x-z)^2,\quad (t,x,z)\in (0,\infty)\times
  \mathbb{R} \times \mathbb{R}. \label{eqn:13}
\end{equation}
Then, we show that a function $f$ in $C_p({\mathbb R})$ belongs to $\mathcal{P}_c$ if and only if $f$ satisfies 
\begin{itemize}
\item[{\Fa}$_c$]
For all $(n,k,y) \in \mathbb{N}_0 \times \mathbb{Z} \times (0,1)$
and $t \geq \frac{1}{2cr^n}$,
\begin{equation}
q_f \left(t,x; \frac{k+y}{r^n} \right)
\geq \min \left\{ q_f \left(t,x; \frac{k}{r^n} \right), \,
q_f \left(t,x; \frac{k+1}{r^n} \right) \right\},\quad x \in \mathbb{R}.
\label{eqn:14}
\end{equation}
\end{itemize}
Inequality \eqref{eqn:14} is a geometrical one
related to position of the three parabolas.
Another interpretation of \eqref{eqn:14} is that
the function $q_f(t,x;\cdot)$ takes a minimum
over the interval $[\frac{k}{r^n},\frac{k+1}{r^n}]$ at the endpoints.

The second reason comes from an analytic viewpoint.
We consider 
the operator
$U: C_p({\mathbb R}) \ni \psi \mapsto U_{\psi} \in C_p({\mathbb R})$
defined by the series
\begin{equation}
 U_\psi(x)= \sum_{j =0}^\infty \frac 1{r^j} \, \psi(r^j x),
  \qquad x \in \mathbb{R}. \label{eqn:31}
\end{equation}
Such a series is known to generate
nowhere differentiable functions
under a suitable condition on $\psi$.
We prove that the condition $U_{\psi} \in \mathcal{P}$ can be
equivalently rephrased by the condition
including the second-order differences of $\psi$.
In fact, we establish
\begin{equation}
\Delta_{n,k}(y;U_{\psi})
=\sum_{j=0}^{n-1}{r^j}\Delta_{n-j,k}(y;\psi)
-\frac{2r^{n}}{y(1-y)}U_{\psi}(y),
\label{eqn:33}
\end{equation}
whenever $\psi \in C_p(\mathbb{R})$ and
$(n,k,y) \in \mathbb {N}_0\times \mathbb {Z}\times(0,1)$.
When $n=0$, the first term of the right-hand side of \eqref{eqn:33}
is interpreted as $0$.
Thus, for a given $c>0$, we see that
$U_{\psi} \in \mathcal{P}_c$ if and only if
the right-hand side of \eqref{eqn:33} is less than or equal to $-2cr^n$ for all $(n,k,y) \in \mathbb {N}_0\times \mathbb {Z}\times(0,1)$.
In other words, the class $\mathcal{P}$
is characterized via the operator $U$.
Besides, making use of \eqref{eqn:33},
we give some sufficient conditions on $\psi$
in order that $U_{\psi} \in \mathcal{P}$.
We show that $U_\psi$ belongs to $\mathcal{P}$
if $\psi$ is concave on $[0,1]$.
Also, even if $\psi$ is not concave on $[0,1]$,
there is the case where $U_{\psi}$ belongs to $\mathcal{P}$
provided that $\psi$ is semiconcave on $[0,1]$
and satisfies some additional assumption.
These simple sufficient conditions enable us
to systematically construct a large number of examples of functions
in the class $\mathcal{P}$ through the explicit formula \eqref{eqn:31}.

A typical example of functions constructed by this procedure
is the generalized Takagi function $\tau_r \in C_p(\mathbb{R})$ defined by
\begin{equation}
\tau_r(x)=U_{d}(x)
=\sum_{j=0}^{\infty}\frac{1}{r^j}d(r^j x),\quad x\in {\mathbb R},
\label{eqn:32}
\end{equation}
where $d\in  C_p({\mathbb R})$ is the distance function to the set $\mathbb{Z}$, that is,
\begin{equation}
d(x)= \min\{|x -z| \,|\, z \in \mathbb{Z} \},
\quad x \in \mathbb{R}.
\label{eqn:varphi}
\end{equation}
The celebrated Takagi function is given by
$\tau_2$. The function $\tau_2$ is equivalent to
the one first constructed by T.~Takagi in 1903,
who showed that $\tau_2$ is nowhere differentiable (see \cite{T}).
Its relevance in analysis, probability theory and number
theory has been widely illustrated by many contributions,
see for instance \cite{T,To,AK,L}.
Since $d$ is concave on $[0,1]$,
we can show that $\tau_r$ belongs to $\mathcal{P}$
for any integer $r \geq 2$.

In connection to {\Fa}$_c$, we also study the behavior
of the Hamilton--Jacobi flow $\{H_t f \}_{t>0}$ starting from $f\in {\mathcal P}$,
where
\begin{equation}
 H_t f(x)=\inf_{z \in \mathbb{R}}q_f(t,x;z), \quad
  (t,x) \in (0, \infty) \times \mathbb{R}.
  \label{eqn:112}
\end{equation}This formula is widely used in the theory of viscosity solutions, and
$H_t f$ is also referred to as an \textit{inf-convolution} of $f$.

There are several papers related to our work.
In \cite{HY}, Hata and Yamaguti proposed a different generalization
of the Tagaki function, the so-called Tagaki class,
which includes not only nowhere differentiable functions,
but also differentiable and even smooth ones.
To analyze this class, they used some functional equations
containing second-order central differences.
Although we also use the second-order central difference
$\Delta_{n,k}(y;f)$ of a function $f\in C_p(\mathbb{R})$,
the frame and the purpose of the investigation of \cite{HY}
are however rather different to ours.
In \cite{Bo,HP, M},
an inequality for approximate midconvexity of the Takagi function
was investigated.
A precise behavior of the flow $\{ H_t \tau \}_{t>0}$
starting from the Takagi function is studied in \cite{FHY}.

The function $U_\psi$ of \eqref{eqn:31} has been considered by many authors.
Cater \cite{Ca} showed that
if $\psi\in C_{p}({\mathbb R})$ is concave on the interval $[0,1]$
and $\psi$ takes its positive maximum over $[0,1]$ at $x=\frac{1}{2}$,
then $U_\psi$ is nowhere differentiable.
Although the connection between the concavity of $\psi$ and $U_\psi$
was already explored in \cite{Ca},
in this paper we show in addition that
the formula \eqref{eqn:31} provides examples of functions in the class $\mathcal{P}$.
Furthermore, we show that $U_\psi$ can belong to $\mathcal{P}$
even if $\psi\in C_{p}({\mathbb R})$ is not concave on $[0,1]$.
Heurteaux \cite{He} gave another sufficient conditions on $\psi\in C_{p}({\mathbb R})$
such that $U_\psi$ is nowhere differentiable.
The set of maximum points in $[0,1]$ of the function $U_{\psi}$
was studied in \cite{FS} for $r=2$.
However, all of the above papers neither
characterize a class of nowhere differentiable functions
nor introduce a class like $\mathcal{P}$.

\medskip
The structure of the present paper is as follows.
In Section \ref{sec:classP} we prove nowhere differentiability
and the geometrical characterization of a function in $\mathcal{P}$.
Section \ref{sec:pathological} is devoted to the formula \eqref{eqn:33}.
We derive some sufficient conditions on $\psi \in \mathcal{P}$ in order that $U_{\psi} \in \mathcal{P}$.
In Section \ref{sec:inf-convolution},
we study how the Hamilton-Jacobi flow
$\{ H_t f \}_{t>0}$ starting from $f\in {\mathcal P}$ behaves.
Section \ref{sec:concluding-remark} contains concluding remarks.

\section{The class $\mathcal{P}$}
\label{sec:classP}

In this section, we state and prove several results
on the class $\mathcal{P}$.
The first result of this section is Theorem \ref{thm:31},
where we prove that each function in $\mathcal{P}$ is nowhere differentiable.
The second result of this section is Theorem \ref{thm:24},
which shows that a function $f$ in $C_p(\mathbb{R})$
belongs to $\mathcal{P}_c$ if and only if $f$ satisfies {\Fa}$_c$.


Since we study periodic functions with period $1$,
we often choose three points
$\frac{k}{r^n}$, $\frac{k+y}{r^n}$, $\frac{k+1}{r^n}$ lying in $[0,1]$.
For this reason, we prepare the set $\nky$
of admissible triplets $(n,k,y)$ as
\[ \nky:=\{ (n,k,y) \mid n \in \mathbb{N}_0, \,
k \in \{0,1,2,3,\dots , r^n-1\}, \, y \in (0,1) \}. \]
For any $(n,k,y) \in \nky$ we have
$[\frac{k}{r^n},\frac{k+1}{r^n}] \subset [0,1]$.
For a constant $c>0$, note that
$f \in C_p({\mathbb R})$ belongs to $\mathcal{P}_c$ if and only if
\eqref{eqn:23e} is satisfied for all $(n,k,y) \in \nky$.

We first derive a fundamental inequality for $f \in \mathcal{P}$.
For $f\in C_p(\mathbb{R})$, we see by \eqref{eqn:21b} that
\begin{equation}
 \Delta_{0,0}(y;f)=\frac{-2f(y)}{y(1-y)},\quad y\in (0,1).
  \label{eqn:D00}
\end{equation}
Thus, for $c>0$ and $y \in (0,1)$,
we have $\Delta_{0,0}(y;f) \leq -2c$ if and only if
\begin{equation}
 cy(1-y) \leq f(y).
  \label{eqn:cyf}
\end{equation}
Therefore we see that
every $f \in \mathcal{P}_c$ satisfies \eqref{eqn:cyf}
for any $y\in (0,1)$.
In particular, when $f\in \mathcal{P}$, we have $f>0$ in $(0,1)$.

Now, we show that each function in $\mathcal{P}$
is nowhere differentiable.
In what follows we write $[z]$ for $z\in \mathbb{R}$
to indicate the largest integer not exceeding $z$.
We denote by $\mathbb{Q}_r$ the set of all rational numbers
that can be written as $\frac{k}{r^n}$ for some
$n \in \mathbb{N}$ and $k\in {\mathbb Z}$.

\begin{theorem}\label{thm:31}
Each function in ${\mathcal P}$ is nowhere differentiable in $\mathbb{R}$.
\end{theorem}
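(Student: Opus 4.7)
The plan is to argue by contradiction. Suppose $f\in\mathcal{P}_c$ (with $c>0$ fixed) is differentiable at some $x_0\in\mathbb{R}$; by $1$-periodicity I may take $x_0\in[0,1)$. I will exhibit a sequence of admissible triplets $(n,k_n,y_n)\in\nky$ along which both $\delta^+_{n,k_n}(y_n;f)$ and $\delta^-_{n,k_n}(y_n;f)$ converge to $f'(x_0)$. The defining inequality \eqref{eqn:23e} then forces $-c\geq\delta^+_{n,k_n}(y_n;f)-\delta^-_{n,k_n}(y_n;f)\to 0$, a contradiction.

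The construction splits according to whether $x_0\in\mathbb{Q}_r$. When $x_0\notin\mathbb{Q}_r$, I set $k_n:=[r^n x_0]$ and $y_n:=r^n x_0-k_n$, so that $y_n\in(0,1)$ and $x_0=(k_n+y_n)/r^n$. Reading off \eqref{eqn:22} with this choice, $\delta^-_{n,k_n}(y_n;f)$ is exactly the chord slope of $f$ between $k_n/r^n$ and $x_0$, while $\delta^+_{n,k_n}(y_n;f)$ is the chord slope between $x_0$ and $(k_n+1)/r^n$. Since $k_n/r^n\nearrow x_0$ from below and $(k_n+1)/r^n\searrow x_0$ from above (each remaining distinct from $x_0$), the existence of $f'(x_0)$ immediately yields $\delta^\pm_{n,k_n}(y_n;f)\to f'(x_0)$.

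When $x_0\in\mathbb{Q}_r$, write $x_0=m/r^{n_0}$; the previous recipe would give $y_n=0$ for $n\geq n_0$, which is inadmissible. Instead I take $y_n:=1/2$ and $k_n:=m\,r^{n-n_0}$ for $n\geq n_0$, so that $k_n/r^n=x_0$. The backward quotient equals $[f(x_0+\tfrac{1}{2r^n})-f(x_0)]/(1/(2r^n))$ and converges to $f'(x_0)$ as an ordinary right-hand difference quotient. The forward quotient is the chord slope between $x_0+1/(2r^n)$ and $x_0+1/r^n$; inserting $f(x_0+h)=f(x_0)+hf'(x_0)+o(h)$ at $h=1/r^n$ and $h=1/(2r^n)$, subtracting, and dividing by $1/(2r^n)$ gives $f'(x_0)+o(1)$.

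The only real subtlety is Case 2, where the forward quotient is evaluated between two points both lying strictly to the right of $x_0$; its convergence is not that of a pure one-sided derivative and genuinely uses the full differentiability of $f$ at $x_0$ via the first-order Taylor expansion. Apart from this, the argument is direct, and the two cases together cover every $x_0\in\mathbb{R}$.
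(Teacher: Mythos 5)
Your proposal is correct and follows essentially the same route as the paper: the same contradiction via \eqref{eqn:23e}, the same dichotomy on whether $x_0\in\mathbb{Q}_r$, and the same choices $k_n=[r^nx_0]$ with $y_n=r^nx_0-k_n$ in the irrational case and $k_n/r^n=x_0$ with a fixed $y_n$ in the rational case (the paper keeps $y_n=y\in(0,1)$ arbitrary where you specialize to $y_n=\tfrac12$, an immaterial difference). The ``subtlety'' you flag in Case 2 is handled in the paper by the identical first-order expansion, just written as a linear combination of two one-sided difference quotients at $x_0$.
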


\begin{proof}
Fix $c>0$.
Suppose that $f \in \mathcal{P}_c$ is differentiable at some point $x \in [0,1]$.

We set $k_n=[r^n x]$ for each $n \in \mathbb{N}$.
Also, set $y_n=y$ if $x\in \mathbb{Q}_r$
and $y_n=r^n x -[r^n x]$ if $x \not \in \mathbb{Q}_r$,
where $y \in (0,1)$ is an arbitrary constant.
We claim that $\delta^{\pm}_{n,k_n}(y_n;f) \to f'(x)$ as $n \to \infty$.
This gives a contradiction
since taking the limit $n \to \infty$
in \eqref{eqn:23e} along these $k_n$ and $y_n$ implies that $0 \leq -c$.

When $x\in \mathbb{Q}_r$,
we have $[r^n x]=r^n x$ for $n \in \mathbb{N}$ large.
In fact, since $x\in \mathbb{Q}_r$,
there are $n_0 \in \mathbb{N}_0$ and $k_0 \in \mathbb{Z}$
such that $x=\frac{k_0}{r^{n_0}}$, so that
$r^n x=k_0r^{n-n_0}\in {\mathbb N}$ if $n \geq n_0$.
For $n\geq n_0$ we find that
\begin{align*}
 \delta^+_{n,k_n}(y_n;f)
 &=\frac{f \bigl( x+\frac{1}{r^n} \bigr)
 -f \bigl(x+\frac{y}{r^n} \bigr)}{\frac{1-y}{r^n}}
 =\frac{f\bigl(x+\frac{1}{r^n}\bigr)-f(x)}{\frac{1}{r^n}\,(1-y)}
 -\frac{f\bigl(x+\frac{y}{r^n}\bigr)-f(x)}{\frac{y}{r^n}\,\frac{1-y}{y}}\\
 &\to \frac{f'(x)}{1-y}-y\frac{f'(x)}{1-y}
 =f'(x) \quad (n \to \infty).
\end{align*}
In the same manner,
we deduce that $\delta^-_{n,k_n}(y_n;f) \to f'(x)$ as $n \to \infty$.

Next, let $x\not\in {\mathbb Q}_r$.
We then have $[r^n x]< r^n x< [r^n x]+1$ for each $n \in \mathbb{N}$.
This implies that $y_n \in (0,1)$ for each $n \in \mathbb{N}$ and that
$\frac{[r^nx]}{r^n} \to x$ as $n \to \infty$.
Thus,
\[
 \delta^+_{n,k_n}(y_n;f)
=\frac{f \bigl( \frac{[r^n x]+1}{r^n} \bigr)-f(x)}{\frac{[r^n x]+1}{r^n}-x}
\to f'(x) \quad (n \to \infty).
\]
Similarly, it follows that $\delta^-_{n,k_n}(y_n;f) \to f'(x)$.
This completes the proof.
\end{proof}

Next, we show that a function $f$ in $C_p({\mathbb R})$ belongs to $\mathcal{P}_c$
if and only if $f$ satisfies {\Fa}$_c$.
To prove this, the following proposition is essential:

\begin{proposition}
 \label{prop:23}
 Let $(n,k,y) \in \nky$ and $t\in (0,\infty)$.
 Then, for any $f\in C_{p}({\mathbb R})$,
 inequality \eqref{eqn:14} holds if and only if
 \begin{equation}
  \Delta_{n,k}(y;f)\leq - \frac{1}{t}.
   \label{eqn:24}
 \end{equation}
\end{proposition}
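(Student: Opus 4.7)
The plan is to exploit the fact that the three parabolas $q_f(t,\cdot;a)$, $q_f(t,\cdot;m)$, $q_f(t,\cdot;b)$ with $a=\frac{k}{r^n}$, $m=\frac{k+y}{r^n}$, $b=\frac{k+1}{r^n}$ all share the same quadratic coefficient $\frac{1}{2t}$. Consequently, each pairwise difference $q_f(t,x;z_1)-q_f(t,x;z_2)$ is an affine function of $x$, so \eqref{eqn:14} reduces to a comparison of just two real numbers.

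First, I would compute
\[
q_f(t,x;m)-q_f(t,x;a) = f(m)-f(a)+\frac{1}{2t}(a-m)(2x-m-a),
\]
which, since $a<m$, is strictly decreasing in $x$ and vanishes at a unique point
\[
x_1 := \frac{m+a}{2}+\frac{t(f(m)-f(a))}{m-a}.
\]
Similarly
\[
q_f(t,x;m)-q_f(t,x;b) = f(m)-f(b)+\frac{1}{2t}(b-m)(2x-m-b)
\]
is strictly increasing in $x$ and vanishes at the unique point
\[
x_2 := \frac{m+b}{2}+\frac{t(f(b)-f(m))}{b-m}.
\]
Hence $q_f(t,x;m)\geq q_f(t,x;a)$ iff $x\leq x_1$, and $q_f(t,x;m)\geq q_f(t,x;b)$ iff $x\geq x_2$.

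The inequality \eqref{eqn:14} asserts that for every $x\in\mathbb{R}$ at least one of these two comparisons holds, i.e.\ the set $\{x\leq x_1\}\cup\{x\geq x_2\}$ covers $\mathbb{R}$, which is equivalent to $x_2\leq x_1$. I would then plug in the definitions, using $b-a=\frac{1}{r^n}$, $m-a=\frac{y}{r^n}$, $b-m=\frac{1-y}{r^n}$, to rewrite $x_2\leq x_1$ as
\[
\frac{b-a}{2}\leq t\left[\frac{f(m)-f(a)}{m-a}-\frac{f(b)-f(m)}{b-m}\right]
=-t\bigl(\delta^+_{n,k}(y;f)-\delta^-_{n,k}(y;f)\bigr),
\]
which, multiplying by $2r^n$ and recalling \eqref{eqn:21b}, is exactly $\Delta_{n,k}(y;f)\leq -\frac{1}{t}$.

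There is no serious obstacle; the argument is essentially bookkeeping. The only point requiring a little care is to verify that the linear functions are genuinely monotone (so that $x_1$, $x_2$ are well-defined and the half-line description of the sublevel/superlevel sets is correct), which is immediate from $a<m<b$. The conceptual heart of the proof is the reduction, via the shared curvature of the three parabolas, of the geometric condition \eqref{eqn:14} to the single scalar inequality $x_2\leq x_1$.
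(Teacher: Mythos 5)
Your proof is correct and follows essentially the same route as the paper's: both compute the intersection points $x_1$, $x_2$ of the pairwise parabola differences (your formulas agree with \eqref{eqn:26} and \eqref{eqn:29}) and reduce \eqref{eqn:14} to the scalar inequality $x_2\leq x_1$. The only difference is that you spell out, via the covering $\{x\leq x_1\}\cup\{x\geq x_2\}=\mathbb{R}$, the step the paper dispatches as ``a geometrical investigation,'' which is a welcome clarification but not a new argument.
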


\begin{proof}
 Fix $(n,k,y) \in \nky$ and $t\in (0,\infty)$.
 Let $x_1(n,k,y,t)$ be the  unique solution of the equation
 \[
 q_f \left(t,x; \frac{k+y}{r^n} \right)
 =q_f\left(t,x; \frac{k}{r^n} \right).
 \]
By direct calculation,
\begin{equation}
 x_1(n,k,y,t)=\frac{k}{r^n}+\frac{y}{2r^n}+ t \delta^-_{n,k}(y;f).
  \label{eqn:26}
\end{equation}
Then, we have
\[
 \begin{cases}
  q_f \left( t,x;\dfrac{k}{r^n} \right)
  \leq q_f \left(t,x;\dfrac{k+y}{r^n} \right),
  & x \leq x_1(n,k,y,t), \medskip \\
  q_f \left(t,x;\dfrac{k}{r^n} \right)
  > q_f \left(t,x;\dfrac{k+y}{r^n} \right),
  & x_1(n,k,y,t) < x.
 \end{cases}
\]
Similarly, the unique solution $x_2(n,k,y,t)$ of the equation
\[
 q_f \left(t,x; \frac{k+y}{r^n} \right)
 =q_f \left(t,x; \frac{k+1}{r^n} \right)
\]
is given by
\begin{equation}
x_2(n,k,y,t)=\frac{k}{r^{n}}+\frac{1+y}{2r^{n}}+ t \delta^+_{n,k}(y;f).
\label{eqn:29}
\end{equation}
Furthermore,
\[
 \begin{cases}
  q_f \left(t,x;\dfrac{k+y}{r^n} \right)
  \geq q_f \left(t,x;\dfrac{k+1}{r^n} \right),
  & x_2(n,k,y,t) \leq x, \medskip \\
  q_f \left(t,x;\dfrac{k+y}{2r^n} \right)
  < q_f \left(t,x; \dfrac{k+1}{r^n} \right),
  & x< x_2(n,k,y,t).
 \end{cases}
\]
Then, a geometrical investigation shows that
inequality \eqref{eqn:14} holds if and only if
\begin{equation}
 x_1(n,k,y,t)\geq x_2(n,k,y,t).\label{eqn:211}
\end{equation}
 By \eqref{eqn:26} and \eqref{eqn:29},
 we see that inequality \eqref{eqn:211} holds if and only if
\[
 \delta^-_{n,k}(y;f)-\delta^+_{n,k}(y;f) \geq \frac{1}{2r^n t}.
\]
The desired inequality follows immediately from \eqref{eqn:21b}.
\end{proof}

Now, we state the second result of this section.

\begin{theorem}
 \label{thm:24}
 Let $f \in C_{p}(\mathbb{R})$ and let $c>0$ be a constant.
 Then, $f$ satisfies {\Fa}$_c$
 if and only if  $f \in \mathcal{P}_c$.
\end{theorem}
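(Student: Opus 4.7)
The plan is to reduce Theorem 2.4 to Proposition 2.3 by carefully tracking how the universal quantifier over $t \geq \frac{1}{2cr^n}$ interacts with the inequality $\Delta_{n,k}(y;f) \leq -\frac{1}{t}$.

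First I would note that by the periodicity of $f$, the condition \Fa$_c$ quantified over $(n,k,y)\in \mathbb{N}_0\times \mathbb{Z}\times(0,1)$ is equivalent to the same condition quantified over $(n,k,y)\in\nky$, and likewise for the inequality defining $\mathcal{P}_c$; this reduction was already observed in the excerpt. So it suffices to prove the equivalence when $(n,k,y)$ ranges over $\nky$.

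Next I would apply Proposition 2.3 pointwise: for each fixed $(n,k,y)\in\nky$ and each $t>0$, inequality \eqref{eqn:14} is equivalent to $\Delta_{n,k}(y;f)\leq -\frac{1}{t}$. Therefore \Fa$_c$ is equivalent to the statement
\begin{equation*}
\Delta_{n,k}(y;f)\leq -\frac{1}{t}\qquad \text{for all }(n,k,y)\in\nky\text{ and all }t\geq\tfrac{1}{2cr^n}.
\end{equation*}
Since the left-hand side is independent of $t$ and the right-hand side $-\frac{1}{t}$ is an increasing function of $t\in(0,\infty)$, the supremum over all admissible $t$ of the right-hand side is attained at the smallest admissible value $t=\frac{1}{2cr^n}$, where it equals $-2cr^n$. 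Hence the displayed family of inequalities is equivalent to the single inequality
\begin{equation*}
\Delta_{n,k}(y;f)\leq -2cr^n\qquad \text{for all }(n,k,y)\in\nky,
\end{equation*}
which by \eqref{eqn:23d} is exactly the definition of $f\in\mathcal{P}_c$.

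I do not expect any serious obstacle here: the proof is essentially an unpacking of Proposition 2.3 followed by the monotonicity observation that the strongest constraint among the $t\geq \frac{1}{2cr^n}$ is delivered by the endpoint $t=\frac{1}{2cr^n}$, and this endpoint is attained so no limiting argument is needed. The only point demanding a little care is matching the range of $(n,k,y)$ between Definition 2.1 and \Fa$_c$, which is handled by the periodicity reduction to $\nky$ noted above.
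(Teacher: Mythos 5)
Your proof is correct and follows essentially the same route as the paper's: both reduce the statement to Proposition~\ref{prop:23} and observe that among all $t \geq \frac{1}{2cr^n}$ the binding inequality $\Delta_{n,k}(y;f)\leq -\frac{1}{t}$ is the one at the endpoint $t=\frac{1}{2cr^n}$, where $-\frac{1}{t}=-2cr^n$; the paper merely writes this as two separate implications rather than a single chain of equivalences. One small slip: since you need $\Delta_{n,k}(y;f)$ to lie below $-\frac{1}{t}$ for \emph{all} admissible $t$, the relevant quantity is the \emph{infimum} of $-\frac{1}{t}$ over $t\geq \frac{1}{2cr^n}$ (attained at the left endpoint because $-\frac{1}{t}$ is increasing), not the supremum — your conclusion is nevertheless the right one.
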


\begin{proof}
Assume first that $f \in \mathcal{P}_c$.
Fix $(n,k,y) \in \nky$ and $t \geq \frac{1}{2cr^n}$ arbitrarily.
By \eqref{eqn:23d} and \eqref{eqn:21b}, we have
\[
 \Delta_{n,k}(y;f) \leq -2cr^{n}\leq -\frac{1}{t},
\]
and so \eqref{eqn:14} holds by Proposition~\ref{prop:23}.
Thus we see that $f$ satisfies {\Fa}$_c$.

Next, assume that {\Fa}$_c$ holds.
Then, by Proposition~\ref{prop:23}, we see that
\[
 \Delta_{n,k}(y;f)\leq -\frac{1}{t}
\]
for all $(n,k,y) \in \nky$ and $t \geq \frac{1}{2cr^n}$.
Letting $t=\frac{1}{2cr^n}$,
we conclude that $f \in \mathcal{P}_c$.
\end{proof}

\section{Functions $U_{\psi}$ and $\mathcal{P}$}
\label{sec:pathological}

In this section, we give sufficient conditions
on $\psi \in C_p(\mathbb{R})$ in order that $U_{\psi} \in \mathcal{P}$,
where $U$ is the operator defined by \eqref{eqn:31}.
The results enable us to generate a large number of functions
in $\mathcal{P}$ through the explicit formula \eqref{eqn:31}.
We also give some examples of $\psi \in C_p(\mathbb{R})$
for which 
$U_{\psi} \not \in \mathcal{P}$.

The following theorem provides a representation
of $\Delta_{n,k}(U_{\psi};y)$ in terms of $\Delta_{n,k}(\psi;y)$,
which plays a crucial role to study if $U_{\psi} \in \mathcal{P}$.
Note that, for every $\psi \in C_p(\mathbb{R})$,
we have $U_{\psi} \in C_p(\mathbb{R})$
and $U_\psi(0)=0$ by the definition of $U_{\psi}$.

\begin{theorem}\label{thm:34}
Let $\psi \in C_p(\mathbb{R})$.
Then, \eqref{eqn:33} holds for each $(n,k,y) \in \nky$.
When $n=0$,
the first term of the right-hand side of \eqref{eqn:33} is interpreted as $0$.
\end{theorem}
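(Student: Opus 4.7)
The plan is to derive \eqref{eqn:33} by direct substitution of the series \eqref{eqn:31} into the definition of the second-order central difference and then splitting the resulting $j$-sum according to whether the dilation index $j$ is smaller or larger than $n$. First I would record the compact three-point form of $\Delta_{n,k}(y;f)$ obtained by combining \eqref{eqn:22} and \eqref{eqn:21b}, namely
\[
\Delta_{n,k}(y;f)=\frac{2r^{2n}}{y(1-y)}\left[(1-y)f\bigl(\tfrac{k}{r^n}\bigr)+yf\bigl(\tfrac{k+1}{r^n}\bigr)-f\bigl(\tfrac{k+y}{r^n}\bigr)\right],
\]
since this isolates cleanly the linear combination on which $\Delta_{n,k}(y;\cdot)$ acts and will be reused twice.

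Next I would substitute $f=U_\psi$; because the series \eqref{eqn:31} converges absolutely and uniformly (by the Weierstrass $M$-test, as $\psi$ is bounded), the three-point linear combination can be applied termwise. For the $j$-th term, the arguments $r^jk/r^n,\ r^j(k+1)/r^n,\ r^j(k+y)/r^n$ become $r^{j-n}k,\ r^{j-n}(k+1),\ r^{j-n}(k+y)$. In the range $0\le j\le n-1$ these are exactly $k/r^{n-j},\ (k+1)/r^{n-j},\ (k+y)/r^{n-j}$, so a second application of the three-point identity shows that the $j$-th summand equals $r^j\Delta_{n-j,k}(y;\psi)$. Summing over $j=0,\dots,n-1$ produces the first block in \eqref{eqn:33}.

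For $j\ge n$ I would invoke the $1$-periodicity of $\psi$ together with $\psi(0)=0$: since $r^{j-n}k,\ r^{j-n}(k+1)\in\mathbb Z$, the first two contributions vanish, while $\psi\bigl(r^{j-n}(k+y)\bigr)=\psi(r^{j-n}y)$. After the index shift $i=j-n$, the tail becomes
\[
-\frac{2r^{2n}}{y(1-y)}\sum_{j=n}^{\infty}\frac{1}{r^j}\psi(r^{j-n}y)=-\frac{2r^{n}}{y(1-y)}\sum_{i=0}^{\infty}\frac{1}{r^i}\psi(r^iy)=-\frac{2r^n}{y(1-y)}U_\psi(y),
\]
which is the second term in \eqref{eqn:33}. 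The case $n=0$ is then automatic since only the block $j\ge 0$ is present, leaving the empty sum interpretation for the first term. Nothing in this argument is genuinely difficult; the main point requiring care is the index bookkeeping that separates the two blocks, and the observation that the $k$-dependence disappears precisely in the block $j\ge n$ because periodicity sends the endpoints $k/r^n,\ (k+1)/r^n$ into $\mathbb Z$ under the dilation $x\mapsto r^j x$.
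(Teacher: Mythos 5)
Your proof is correct and follows essentially the same route as the paper: both split the series at $j=n$, use $1$-periodicity together with $\psi(0)=0$ to reduce the tail to $\frac{1}{r^n}U_\psi(y)$, and identify the head with $\sum_{j=0}^{n-1}r^j\Delta_{n-j,k}(y;\psi)$. The only cosmetic difference is that you apply the three-point form of $\Delta_{n,k}$ termwise to the series, whereas the paper first records the partial-sum identities for $U_\psi$ at the three nodes and then substitutes them into the difference quotients.
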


\begin{proof}
Let $(n,k,y) \in \nky$.
When $n=0$, we have $k=0$,
so that \eqref{eqn:33} follows from \eqref{eqn:D00}
since $U_\psi(0)=0$.
If $n \geq 1$, then
\begin{align*}
 &U_{\psi}\left( \frac{k+y}{r^n} \right)
 -\sum_{j=0}^{n-1}\frac{1}{r^j}\psi \left( \frac{k+y}{r^{n-j}} \right) \\
 &=\sum_{j=n}^{\infty} \frac{1}{r^j} \psi \left( r^{j-n}(k+y) \right)
 =\sum_{j=n}^{\infty} \frac{1}{r^j} \psi \left( r^{j-n}y \right)
 =\frac{1}{r^n}U_{\psi}(y).
\end{align*}
This is valid even for $y=0$ and $y=1$.
Since $U_{\psi}(0)= U_{\psi}(1)=0$, we have
\[
 U_{\psi} \left( \frac{k}{r^n} \right)
 =\sum_{j=0}^{n-1} \frac{1}{r^j} \psi \left(\frac{k}{r^{n-j}} \right), \quad
 U_{\psi} \left( \frac{k+1}{r^n} \right)
 =\sum_{j=0}^{n-1} \frac{1}{r^j} \psi \left(\frac{k+1}{r^{n-j}} \right).
\]
We therefore have
\begin{align*}
\Delta_{n,k}(y;U_{\psi})
&= 2r^{n}\left[
\frac{U_{\psi}\bigl(\frac{k+1}{r^n}\bigr)-U_{\psi}\bigl(\frac{k+y}{r^n}\bigr)}{\frac{1-y}{r^n}}-\frac{U_{\psi}\bigl(\frac{k+y}{r^n}\bigr)-U_{\psi}\bigl(\frac{k}{r^n}\bigr)}{\frac{y}{r^n}}\right]\\
&= 2r^{n}\left[
\frac{\sum_{j=0}^{n-1}\frac{1}{r^j}\Bigl(\psi\bigl(\frac{k+1}{r^{n-j}}\bigr)-\psi\bigl(\frac{k+y}{r^{n-j}}\bigr)\Bigr) -\frac{1}{r^n}U_{\psi}(y)}{\frac{1-y}{r^n}}\right.\\
&\qquad \qquad - \left. \frac{\sum_{j=0}^{n-1}\frac{1}{r^j}\Bigl(\psi\bigl(\frac{k+y}{r^{n-j}}\bigr)-\psi\bigl(\frac{k}{r^{n-j}}\bigr)\Bigr) +\frac{1}{r^n}U_{\psi}(y)}{\frac{y}{r^n}}\right]\\
&= \sum_{j=0}^{n-1}r^j\ 2r^{n-j}\left[
\frac{\psi\bigl(\frac{k+1}{r^{n-j}}\bigr)-\psi\bigl(\frac{k+y}{r^{n-j}}\bigr)}{\frac{1-y}{r^{n-j}}}-\frac{\psi\bigl(\frac{k+y}{r^{n-j}}\bigr)-\psi\bigl(\frac{k}{r^{n-j}}\bigr)}{\frac{y}{r^{n-j}}}\right]\\
&\qquad\qquad
-\frac{2r^{n}}{y(1-y)}U_{\psi}(y)\\
&=\sum_{j=0}^{n-1}{r^j}\Delta_{n-j,k}(y;\psi)-\frac{2r^{n}}{y(1-y)}U_{\psi}(y).
\end{align*}
This implies \eqref{eqn:33}.
\end{proof}


Applying Theorem \ref{thm:34},
we derive some sufficient conditions
on $\psi \in C_p(\mathbb{R})$ that guarantee $U_{\psi} \in \mathcal{P}$.
As a typical result, it turns out that
$U_{\psi} \in \mathcal{P}$
if $\psi$ is concave in $[0,1]$ and positive in $(0,1)$.

Let us recall a notion of concavity.
A function $g:[0,1] \to \mathbb{R}$ is said to be concave on $[0,1]$
if the inequality
\[ \lambda g(x)+(1-\lambda)g(y)\leq g(\lambda x+(1-\lambda)y) \]
holds for all $x,y\in [0,1]$ and $\lambda\in [0,1]$.
If the reversed inequality holds, then $g$ is said to be convex.
For a constant $\alpha \geq 0$,
a function $g$ on $[0,1]$ is said to be $\alpha$-semiconcave on $[0,1]$
if $g(x)+ \frac{\alpha}{2}x(1-x)$ is concave on $[0,1]$.
This is equivalent to the condition that
$g(x)- \frac{\alpha}{2}x^2$ is concave on $[0,1]$.

\begin{remark}\label{rem:32}
\begin{enumerate}[label=(\roman*),leftmargin=*]
 \item
	  Let $\psi \in C_p(\mathbb{R})$
	  and assume that $\psi$ is concave on some interval $I$.
	  Then it is easy to see that
	  $\Delta_{n,k}(y;\psi) \leq 0$
	  for all $(n,k,y) \in \mathbb{N}_0\times \mathbb{Z}\times (0,1)$
	  such that $[\frac{k}{r^n},\frac{k+1}{r^n}] \subset I$.
	  More generally, if $\psi \in C_p({\mathbb R})$ is $\alpha$-semiconcave on $I$,
	  then we have $\Delta_{n,k}(y;\psi) \leq \alpha$
	  for all $(n,k,y) \in \mathbb{N}_0\times \mathbb{Z}\times (0,1)$
	  such that $[\frac{k}{r^n},\frac{k+1}{r^n}] \subset I$.
	  The reversed inequalities hold for ($\alpha$-semi)convex functions.
  \item If $\psi \in C_p({\mathbb R})$ is concave on $[0,1]$,
	   then we have $\Delta_{n,k}(y,\psi) \leq 0$ for all $(n,k,y) \in \nky$ by (i).
	   However, the converse is not true in general:
	   that is, even if $\Delta_{n,k}(y,\psi) \leq 0$ for all $(n,k,y) \in \nky$,
	   we cannot say that $\psi$ is  concave on $[0,1]$.
	   Every $f\in {\mathcal P}$ gives a counterexample to this.
	   In fact, $\Delta_{n,k}(y,f)\leq 0$ for all $(n,k,y) \in \nky$,
	   but $f$ is never concave on $[0,1]$ by Theorem~\ref{thm:31},
	   since a concave function must be differentiable almost everywhere.
\end{enumerate}
\end{remark}

We first prepare inequalities involving $U_{\psi}$ and
the generalized Takagi function $\tau_r$ defined in \eqref{eqn:32}.
Recall that $d$ is the distance function given by \eqref{eqn:varphi}.

\begin{lemma}\label{lem:33}
Let $\psi \in C_p(\mathbb{R})$.
Assume that there exists a constant $m>0$
such that $m d(x) \leq \psi(x)$ for all $x \in [0,1]$.
Then, we have
\begin{equation}
\frac{m r}{r-1} x(1-x)
\leq m \tau_r(x)
\leq U_{\psi}(x), \quad x \in [0,1].
\label{eqn:317}
\end{equation}
\end{lemma}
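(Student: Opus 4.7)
The plan is to establish the two inequalities separately. The right-hand inequality $m\tau_r(x) \leq U_\psi(x)$ is essentially immediate: both $d$ and $\psi$ are $1$-periodic, so the hypothesis $md \leq \psi$ on $[0,1]$ extends to all of $\mathbb{R}$ by periodicity. Applying it pointwise to each term of the defining series \eqref{eqn:31} yields $\frac{m}{r^{j}}d(r^{j}x) \leq \frac{1}{r^{j}}\psi(r^{j}x)$ for every $j \geq 0$, and summing in $j$ gives $m\tau_r(x) \leq U_\psi(x)$.

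The substantive content is the left-hand inequality, which reduces to
\[
\tau_r(x) \geq \frac{r}{r-1}\,x(1-x),\qquad x \in [0,1].
\]
My plan is to exploit the self-similarity relation
\[
\tau_r(x) = d(x) + \frac{1}{r}\,\tau_r(\{rx\}),
\]
which follows directly from the series \eqref{eqn:32} together with the $1$-periodicity of $\tau_r$. For $x \in [0,1)$ I would write $x = \tfrac{k+y}{r}$ with $k \in \{0,1,\ldots,r-1\}$ and $y \in [0,1)$, and introduce $f(x) := \tau_r(x) - \frac{r}{r-1}\,x(1-x)$; the self-similarity then rewrites as
\[
f\!\left(\tfrac{k+y}{r}\right) = B(k,y) + \tfrac{1}{r}\,f(y), \qquad
B(k,y) := d\!\left(\tfrac{k+y}{r}\right) - \frac{(k+y)(r-k-y) - ry(1-y)}{r(r-1)}.
\]

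The main algebraic step, which I expect to be the principal obstacle, is to verify $B(k,y) \geq 0$ for every admissible $(k,y)$. I would split according to whether $\tfrac{k+y}{r} \leq \tfrac{1}{2}$ (so that $d(\tfrac{k+y}{r}) = \tfrac{k+y}{r}$) or $\tfrac{k+y}{r} \geq \tfrac{1}{2}$ (so that $d(\tfrac{k+y}{r}) = \tfrac{r-k-y}{r}$). In each region, $r(r-1)\,B(k,y)$ simplifies to the form $u(u-1) + r\,y(1-y)$ with $u = k+y$ or $u = r-k-y$ respectively: for the interior indices $1 \leq k \leq r-2$ one has $u \geq 1$, so both summands are non-negative, while for the boundary indices $k=0$ and $k=r-1$ the expression collapses to $(r-1)\,y(1-y) \geq 0$.

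Granted $B \geq 0$, I would conclude by a minimum-principle argument. Since $f$ is continuous on the compact interval $[0,1]$, its minimum $\mu$ is attained at some $x^* \in [0,1]$. If $x^* \in \{0,1\}$ then $\mu = 0$, since $\tau_r(0) = \tau_r(1) = 0$. Otherwise $x^* \in (0,1)$ and we decompose $x^* = \tfrac{k^*+y^*}{r}$; since $y^* \in [0,1]$ we have $f(y^*) \geq \mu$, and the identity combined with $B \geq 0$ gives
\[
\mu = f(x^*) = B(k^*,y^*) + \tfrac{1}{r}\,f(y^*) \geq \tfrac{1}{r}\,\mu,
\]
whence $\mu\,\tfrac{r-1}{r} \geq 0$ and therefore $\mu \geq 0$, which is the desired inequality.
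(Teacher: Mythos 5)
Your proof is correct, but for the substantive half --- the inequality $\tau_r(x)\geq \frac{r}{r-1}x(1-x)$ --- you take a genuinely different route from the paper. The paper simply discards all but the first two terms of the series: it sets $F(x)=d(x)+\frac{1}{r}d(rx)$, notes $F\leq\tau_r$ termwise, and verifies $G(x):=\frac{r}{r-1}x(1-x)\leq F(x)$ directly, using the symmetry of both functions about $x=\frac12$ and explicit estimates on the three subintervals $[0,\frac{1}{2r}]$, $[\frac{1}{2r},\frac1r]$, $[\frac1r,\frac12]$. You instead keep the whole series, exploit the exact self-similarity $\tau_r(x)=d(x)+\frac1r\tau_r(\{rx\})$, and run a comparison argument: after checking that the source term $B(k,y)$ is nonnegative (your algebra here is right --- $r(r-1)B$ equals $u(u-1)+ry(1-y)$ with $u=k+y$ or $u=r-k-y$ according to which side of $\frac12$ the point lies, and the boundary indices $k=0,r-1$ collapse to $(r-1)y(1-y)$), the minimum of $f=\tau_r-G$ satisfies $\mu\geq\mu/r$ and hence $\mu\geq0$. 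The paper's argument is shorter and more elementary, and it incidentally proves the slightly stronger statement that the two-term partial sum already dominates the parabola; your argument costs a bit more bookkeeping in the verification of $B\geq0$ but is more structural, since it uses the functional equation of $\tau_r$ itself and the same scheme would adapt to other self-affine series or to sharpening the constant. The easy half ($m\tau_r\leq U_\psi$ by termwise comparison after extending $md\leq\psi$ by periodicity) coincides with the paper's.
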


\begin{proof}
It follows from our assumption that $m d(r^jx)\leq \psi(r^jx)$
for all $x\in [0,1]$ and $j \in \mathbb{N}_0$.
Thus, $m \tau_r(x)\leq U_{\psi}(x)$ by taking the sum.

It remains to prove that
\begin{equation}
\frac{r}{r-1} x(1-x) \leq \tau_r(x), \quad x \in [0,1].
\label{eqn:318}
\end{equation}
Let
\[
 F(x)=d(x)+\frac{1}{r}d(rx),\quad
 G(x)=\frac{r}{r-1}x(1-x), \quad x \in [0,1].
\]
Since $F \leq \tau_r$,
it suffices to show that $G(x) \leq F(x)$ for $x\in [0,1]$.
As $F$ and $G$ are symmetric about $x=\frac{1}{2}$,
we may assume that $x \in [0,\frac{1}{2}]$.
Note that
\[
 F(x)=2x \
 \left( 0 \leq x \leq \frac{1}{2r} \right), \
 F(x)=\frac{1}{r} \
 \left( \frac{1}{2r} \leq x \leq \frac{1}{r} \right), \
 F(x) \geq x \
 \left( \frac{1}{r} \leq x \leq \frac{1}{2} \right).
\]
When $0 \leq x \leq \frac{1}{r}$, we have
\[
 G(x) \leq G\left( \frac{1}{r} \right)=\frac{1}{r}, \quad
 G(x) \leq \frac{r}{r-1}x(1-0) \leq 2x.
\]
Thus $G(x) \leq F(x)$.
Next, let $\frac{1}{r} \leq x \leq \frac{1}{2}$.
Then,
\[
 G(x) \leq \frac{r}{r-1}x \left( 1-\frac{1}{r} \right)=x \leq F(x).
\]
Hence, we conclude \eqref{eqn:318}.
\end{proof}

\begin{remark}\label{rem:3200}
Assume that $\psi \in C_p(\mathbb{R})$ is concave in $[0,1]$
and $\psi>0$ in $(0,1)$.
Then, we have
\begin{equation}
2 \psi \left( \frac{1}{2} \right) d(x) \leq \psi(x),
\quad x \in [0,1],
\label{eq:dpsi}
\end{equation}
and thus $\psi$ satisfies the assumption in Lemma \ref{lem:33}
for $m= 2\psi (\frac{1}{2})$.
Indeed, by the concavity of $\psi$, its graph lies above
the segment connecting $(0,\psi(0))$ and $(\frac{1}{2},\psi(\frac{1}{2}))$
and the segment connecting $(\frac{1}{2},\psi(\frac{1}{2}))$ and $(1,\psi(1))$.
This shows \eqref{eq:dpsi} since $\psi(0)=\psi(1)=0$.
\end{remark}

Now, we state the main result of this section.

\begin{theorem}\label{thm:35}
Let $\psi \in C_p(\mathbb{R})$.
Assume that there exist two constants
$m>0$ and $\alpha \geq 0$ such that
\begin{enumerate}[label={\rm (\roman*)}]
 \item$m d(x) \leq \psi(x)$ for all $x\in [0,1]$.
 \item $\Delta_{n,k}(y;\psi) \leq \alpha$ for all $(n,k,y) \in \nky$.
\end{enumerate}
If $2mr > \alpha$, then $U_{\psi} \in \mathcal{P}_c$ with $c=\frac{2mr-\alpha}{2(r-1)}$.
\end{theorem}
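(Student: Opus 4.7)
The plan is to apply the key identity \eqref{eqn:33} from Theorem~\ref{thm:34} and bound each of the two terms on its right-hand side, then verify that the resulting estimate matches $-2cr^n$ with $c=\frac{2mr-\alpha}{2(r-1)}$.

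Fix $(n,k,y)\in\nky$. For the first term $\sum_{j=0}^{n-1}r^j\Delta_{n-j,k}(y;\psi)$, I would like to use assumption (ii) directly, but the index $k$ (which is admissible at level $n$, i.e.\ $k<r^n$) need not be admissible at level $n-j$. The remedy is periodicity: since $\psi$ has period $1$, the difference $\Delta_{n-j,k}(y;\psi)$ is invariant under replacing $k$ by $k \bmod r^{n-j}$, so the hypothesis $\Delta \leq \alpha$ transfers to arbitrary integer $k$. This gives
\[
\sum_{j=0}^{n-1} r^j \Delta_{n-j,k}(y;\psi) \leq \alpha\sum_{j=0}^{n-1}r^j = \alpha\,\frac{r^n-1}{r-1}.
\]
For the second term, assumption (i) triggers Lemma~\ref{lem:33}, yielding
\[
U_{\psi}(y) \geq \frac{mr}{r-1}\, y(1-y),
\]
and therefore
\[
-\frac{2r^n}{y(1-y)}U_{\psi}(y) \leq -\frac{2mr^{n+1}}{r-1}.
\]

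Substituting both bounds into \eqref{eqn:33}, I obtain
\[
\Delta_{n,k}(y;U_{\psi}) \leq \frac{\alpha(r^n-1)-2mr^{n+1}}{r-1}
= -\frac{(2mr-\alpha)r^n}{r-1}-\frac{\alpha}{r-1}
\leq -\frac{(2mr-\alpha)r^n}{r-1},
\]
where the last inequality uses $\alpha\geq 0$. Since $2mr>\alpha$, the coefficient $2mr-\alpha$ is positive, and the right-hand side equals $-2cr^n$ with $c=\frac{2mr-\alpha}{2(r-1)}>0$. Invoking \eqref{eqn:23d}, this means $U_{\psi}\in\mathcal{P}_c$. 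The case $n=0$ is handled automatically, since then the sum in \eqref{eqn:33} is empty and only the second estimate is needed.

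The main delicate point is the periodicity argument allowing the application of (ii) at shifted indices; once that is in place the computation is essentially bookkeeping designed to make the chosen value of $c$ come out exactly right.
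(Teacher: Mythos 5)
Your proof is correct and follows essentially the same route as the paper: apply \eqref{eqn:33}, bound the sum by assumption (ii) and the last term by Lemma~\ref{lem:33}, and check the algebra for $c=\frac{2mr-\alpha}{2(r-1)}$. The only difference is that you explicitly justify via periodicity why (ii) applies at the shifted indices $\Delta_{n-j,k}$ with $k$ possibly exceeding $r^{n-j}-1$, a point the paper leaves implicit; this is a welcome addition, not a divergence.
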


\begin{proof}
Let us derive $\Delta_{n,k}(y;U_{\psi}) \leq -2cr^n$
for a fixed $(n,k,y) \in \nky$.
From Lemma \ref{lem:33} it follows that
\[ -\frac{2r^{n}}{y(1-y)}U_{\psi}(y)
\leq -\frac{2m r^{n+1}}{r-1}. \]
If $n=0$, we see by \eqref{eqn:D00} that
$\Delta_{0,0} (y;U_{\theta}) \leq -\frac{2mr}{r-1}<-2c$.
For $n \geq 1$ we have
\[
\sum_{j=0}^{n-1}r^j \Delta_{n-j,k}(y;\psi)
\leq \sum_{j=0}^{n-1} r^j \alpha
=\alpha \cdot \frac{r^n-1}{r-1}
<\alpha \cdot \frac{r^n}{r-1}.
\]
Thus, by \eqref{eqn:33}
\[
\Delta_{n,k}(y;U_\psi)
\leq \alpha \cdot \frac{r^n}{r-1} - \frac{2m r^{n+1}}{r-1}
=-2cr^n,
\]
which proves the theorem.
\end{proof}

Let us denote by $E$ the set of $\psi \in C_p(\mathbb{R})$
satisfying (i) and (ii) in Theorem \ref{thm:35}
for some $m>0$ and $\alpha \geq 0$ with $2mr > \alpha$.
Theorem \ref{thm:35} asserts that $U_{\psi} \in \mathcal{P}$ for every $\psi \in E$.
We give typical classes that are included in $E$.

\begin{proposition}\label{prop:twosets}
The set $E$ includes the following two sets:
\begin{itemize}
\item[$(1)$]
$\mathit{SC}_0:=\{ \psi \in C_p(\mathbb{R}) \mid \mbox{$\psi$ is concave in $[0,1]$ and $\psi>0$ in $(0,1)$} \}$.
\item[$(2)$]
$\mathcal{P}$.
\end{itemize}
\end{proposition}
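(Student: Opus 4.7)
The plan is to show $\mathit{SC}_0 \subset E$ and $\mathcal{P} \subset E$ by, in each case, exhibiting admissible constants $m>0$ and $\alpha\ge 0$ verifying hypotheses (i) and (ii) of Theorem~\ref{thm:35} together with the gap condition $2mr>\alpha$.

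For (1), given $\psi \in \mathit{SC}_0$, I would take $\alpha = 0$. Remark~\ref{rem:32}(i) shows that concavity of $\psi$ on $[0,1]$ forces $\Delta_{n,k}(y;\psi) \leq 0$ on all of $\nky$, so (ii) holds. For (i), I would invoke Remark~\ref{rem:3200}, which supplies exactly the bound $2\psi(\tfrac{1}{2}) d(x) \leq \psi(x)$ on $[0,1]$; hence $m := 2\psi(\tfrac{1}{2})$ is strictly positive because $\psi>0$ in $(0,1)$. The gap condition $2mr > \alpha$ reduces to $4r\psi(\tfrac{1}{2}) > 0$, which is clear.

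For (2), fix $\psi \in \mathcal{P}_c$ for some $c>0$. Condition (ii) is again handled with $\alpha = 0$: the defining inequality \eqref{eqn:23d} gives $\Delta_{n,k}(y;\psi) \leq -2cr^n \leq 0$ throughout $\nky$. For condition (i), I would rely on the fundamental lower bound \eqref{eqn:cyf}, namely $\psi(y) \geq cy(1-y)$ for $y \in (0,1)$, with the endpoints trivial since $\psi(0)=\psi(1)=0=d(0)=d(1)$. A short two-case comparison—$y \leq \tfrac12$ yields $y(1-y)\ge y/2 = d(y)/2$, while $y \geq \tfrac12$ yields $y(1-y)\ge (1-y)/2 = d(y)/2$—gives $y(1-y) \geq \tfrac{1}{2} d(y)$ on $[0,1]$. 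Combining, $\psi(y) \geq (c/2) d(y)$, so $m := c/2 > 0$ is admissible, and the gap $2mr = cr > 0 = \alpha$ is immediate.

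No serious obstacle is anticipated, since both inclusions essentially bookkeep previously recorded facts: Remark~\ref{rem:32}(i) and Remark~\ref{rem:3200} handle part (1) verbatim, and part (2) rests on the already extracted one-liner \eqref{eqn:cyf}. The only place requiring a moment of thought is the elementary comparison between $y(1-y)$ and $d(y)$ needed in (2), which is purely computational.
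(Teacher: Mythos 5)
Your proof is correct, and part (1) coincides with the paper's argument verbatim: Remark~\ref{rem:3200} supplies hypothesis (i) with $m=2\psi(\tfrac{1}{2})$, Remark~\ref{rem:32}-(i) supplies (ii) with $\alpha=0$. In part (2) you follow the same route --- \eqref{eqn:cyf} for (i), the definition of $\mathcal{P}_c$ for (ii) --- but you insert the comparison $y(1-y)\ge \tfrac{1}{2}d(y)$ and accordingly take $m=c/2$, whereas the paper passes from \eqref{eqn:cyf} directly to $m=c$. Your extra step is not pedantry: on $[0,1]$ one has $d(y)\ge y(1-y)$ (strictly in the interior), so the bound $\psi(y)\ge cy(1-y)$ does \emph{not} yield $\psi(y)\ge c\,d(y)$; indeed $m=c$ fails for the Takagi function, since $\tau_2\in\mathcal{P}_2$ by Remark~\ref{rem:36} while $\tau_2(\tfrac{1}{2})=\tfrac{1}{2}<1=2\,d(\tfrac{1}{2})$. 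Thus the paper's choice of $m$ is too generous, and your halved constant is what the argument actually delivers; the only consequence is that $U_\psi\in\mathcal{P}_{c'}$ with $c'=\frac{cr}{2(r-1)}$ rather than $\frac{cr}{r-1}$, which does not affect the inclusion $\mathcal{P}\subset E$ or any later use of the proposition.
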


\begin{proof}
(1)
Let $\psi \in \mathit{SC}_0$.
It follows from Remark \ref{rem:3200}
that $\psi$ satisfies Theorem \ref{thm:35}-(i)
for $m=2\psi(\frac{1}{2})$,
while we can take $\alpha=0$ in Theorem \ref{thm:35}-(ii) by Remark \ref{rem:32}-(i).
Since $2mr > \alpha$, we have $\psi \in E$
and $U_{\psi} \in \mathcal{P}_c$ with
$c=\frac{2r}{r-1} \psi ( \frac{1}{2} )$.

\medskip

(2)
Let $\psi \in \mathcal{P}_c$ for some $c>0$.
By \eqref{eqn:cyf}, we can take $m=c$ in Theorem \ref{thm:35}-(i).
We also take $\alpha=0$ in Theorem \ref{thm:35}-(ii)
by the definition of $ \mathcal{P}_c$.
Since $2mr > \alpha$, we conclude that $\psi \in E$
and $U_\psi\in \mathcal{P}_{c'}$ with $c'=\frac{cr}{r-1}$.
\end{proof}

Note that the two sets
$\mathit{SC}_0$ and $\mathcal{P}$ above are mutually disjoint,
since a concave function is differentiable almost everywhere.
Also, if $\psi$ belongs to $ \mathcal{P}$,
then $U_\psi$ also belongs to $ \mathcal{P}$
since $\mathcal{P} \subset E$ by Proposition \ref{prop:twosets}-(2).
Thus, $\mathcal{P}$ is an invariant set under the operator $U$.

\begin{remark}\label{rem:36}
By Proposition \ref{prop:twosets}-(1) and its proof,
we see that the generalized Takagi function $\tau_r$
belongs to $\mathcal{P}_c$ with $c=\frac{r}{r-1}$
since $d \in C_p({\mathbb R})$ is concave in $[0,1]$
and $d(\frac{1}{2})=\frac{1}{2}$.
In particular, the Takagi function $\tau_2$
is in $\mathcal{P}_2$ for $r=2$.
\end{remark}


If $\psi \in C_p(\mathbb{R})$ is $\alpha$-semiconcave in $[0,1]$,
then (ii) in Theorem \ref{thm:35} is fulfilled
by Remark \ref{rem:32}-(i).
However, (i) does not hold in general even if $\psi>0$ in $(0,1)$.
One may then wonder
if $U_{\psi}$ belongs to $\mathcal{P}$ for $\psi$ in
\[ \mathit{SC}_{\alpha}
:=\{ \psi \in C_p(\mathbb{R}) \mid
\mbox{$\psi$ is $\alpha$-semiconcave in $[0,1]$ and $\psi>0$ in $(0,1)$} \} \]
with $\alpha>0$.
The answer is no.
Besides, $U_{\psi}$ for $\psi \in \mathit{SC}_{\alpha}$
does not necessarily possess nowhere differentiable character.
Namely, for every $\alpha>0$
there are the following three examples of $\psi \in \mathit{SC}_{\alpha}$:
\begin{itemize}
\item[(A)]
$U_{\psi} \in \mathcal{P}$
and $\psi \not \in \mathit{SC}_0$.
\item[(B)]
$U_{\psi} \not \in \mathcal{P}$
and $U_{\psi}$ is nowhere differentiable in $[0,1]$.
\item[(C)]
$U_{\psi} \not \in \mathcal{P}$
and $U_{\psi} \in C^{\infty}((0,1))$.
\end{itemize}

Let us give an example of $\psi \in \mathit{SC}_{\alpha}$
satisfying each (A)--(C).

\begin{example}\label{ex:308}
For constants $a, b>0$, let $\psi_0 =ad +bd^2\in C_p(\mathbb{R})$. Then, $\psi_0$ is  not concave on $[0,1]$ but $2b$-semiconcave on $[0,1]$. In addition, when $ar>b$, $U_{\psi_0} \in \mathcal{P}$.
We thus obtain a function satisfying (A).

Indeed, since $\psi_0(x)= ax+bx^2$ on $[0,\frac{1}{2}]$,
$\psi_0$ is not concave on $[0,1]$.
Also, we have $\psi_0(x)+bx(1-x)=(a+b)d(x)$ on $[0,1]$,
and so $\psi_0$ is $2b$-semiconcave on $[0,1]$.
Finally, since $\psi_0 \geq ad$ on $[0,1]$,
we can take $m=a$ and $\alpha=2b$ in Theorem \ref{thm:35}.
Thus, $\psi_0 \in E$ and so $U_{\psi_0}\in \mathcal{P}$.

This example also shows that $\mathit{SC}_0 \cup \mathcal{P} \subsetneq E$.
\end{example}


Let us next discuss the example of (B).
Let $\theta \in C_p(\mathbb{R})$ be a function such that
\[ \theta(x)=x^2 \mbox{ for } x \in \left[ 0,\frac{1}{r} \right],
\quad \theta \in C^2(\mathbb{R}),
\quad \theta>0 \mbox{ in } (0,1). \]
We now apply \cite[Theorem 3.1]{He},
which asserts that,
if $\psi \in C_p(\mathbb{R}) \cap C^1(\mathbb{R})$
and $\psi'$ is H\"older continuous in $\mathbb{R}$,
then
$U_{\psi}$ is nowhere differentiable in $\mathbb{R}$.
Since $\theta$ satisfies these conditions,
we deduce that $U_\theta$ is nowhere differentiable in $\mathbb{R}$.
However, $U_\theta$ does not belong to $\mathcal{P}$ as shown below.


\begin{theorem}\label{thm:310}
$\Delta_{n,0}(\frac{1}{r};U_{\theta})=-\frac{2}{r-1}$
for each $n \in \mathbb{N}_0$.
Thus, $U_{\theta} \not \in \mathcal{P}$.
\end{theorem}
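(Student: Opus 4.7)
The plan is to apply the representation formula \eqref{eqn:33} from Theorem~\ref{thm:34} directly to $\psi = \theta$ at $(n, 0, 1/r)$, which reduces the claim to two concrete computations: evaluating $U_\theta(1/r)$ and the individual second-order differences $\Delta_{m,0}(1/r; \theta)$ for $m \geq 1$.

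First I would exploit the periodicity of $\theta$ to compute $U_\theta(1/r)$. For each $j \geq 1$, the argument $r^j \cdot (1/r) = r^{j-1}$ is an integer, so $\theta(r^{j-1}) = \theta(0) = 0$. Only the $j=0$ term survives in the series \eqref{eqn:31}, and since $1/r \in [0, 1/r]$ we use the quadratic form $\theta(x) = x^2$ to get $U_\theta(1/r) = 1/r^2$. This already settles the case $n=0$ via \eqref{eqn:D00}: $\Delta_{0,0}(1/r; U_\theta) = -\frac{2 \cdot (1/r^2)}{(1/r)(1-1/r)} = -\frac{2}{r-1}$.

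For $n \geq 1$, the main computation is to show $\Delta_{m,0}(1/r; \theta) = 2$ for every $m \geq 1$. The point is that the three relevant nodes $0$, $1/r^{m+1}$, $1/r^m$ all lie in $[0, 1/r]$, so on this stencil $\theta$ coincides with $x^2$. A direct evaluation of $\delta^+_{m,0}(1/r; x^2)$ and $\delta^-_{m,0}(1/r; x^2)$ followed by multiplication by $2r^m$ yields the constant value $2$ (this matches the fact that $(x^2)'' = 2$). I'd record this as a short computation rather than grind through it inline.

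With these two ingredients in hand, plugging into \eqref{eqn:33} gives
\[
\Delta_{n,0}\!\left(\tfrac{1}{r}; U_\theta\right) = \sum_{j=0}^{n-1} r^j \cdot 2 \;-\; \frac{2r^n}{(1/r)(1-1/r)} \cdot \frac{1}{r^2} = \frac{2(r^n - 1)}{r-1} - \frac{2r^n}{r-1} = -\frac{2}{r-1},
\]
which is the stated identity, valid for all $n \in \mathbb{N}_0$. For the concluding assertion $U_\theta \notin \mathcal{P}$, I would argue by contradiction: if $U_\theta \in \mathcal{P}_c$ for some $c > 0$, then by \eqref{eqn:23d} we would need $\Delta_{n,0}(1/r; U_\theta) \leq -2cr^n \to -\infty$, contradicting the bounded value $-2/(r-1)$. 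The only mild obstacle is bookkeeping the indices $r^{j-n}$ in the telescoping check of $U_\theta(1/r)$ and verifying the stencil lies inside $[0,1/r]$; everything else is algebraic simplification.
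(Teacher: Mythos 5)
Your proposal is correct and follows essentially the same route as the paper: compute $U_\theta(\frac1r)=\frac1{r^2}$, note $\Delta_{m,0}(\frac1r;\theta)=2$ for $m\geq 1$ since the stencil lies in $[0,\frac1r]$ where $\theta(x)=x^2$, and plug into \eqref{eqn:33}. The only difference is that you spell out the verification of $\Delta_{m,0}(\frac1r;\theta)=2$ and the final contradiction with $\Delta_{n,0}(\frac1r;U_\theta)\leq -2cr^n$, which the paper leaves implicit.
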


\begin{proof}
Let $n \in \mathbb{N}_0$.
We have
\[
U_{\theta} \left( \frac{1}{r} \right)
=\sum_{j=0}^{\infty} \frac{1}{r^j} \theta(r^{j-1})
=\theta(r^{-1})=\frac{1}{r^2}.
\]
Thus,
\[
\left. \frac{2r^n}{y(1-y)} U_{\theta}(y) \right|_{y=\frac{1}{r}}
=\frac{2r^n}{r-1}.
\]
When $n=0$, this and \eqref{eqn:D00} shows that
$\Delta_{0,0} (\frac{1}{r};U_{\theta})=-\frac{2}{r-1}$.
Let $n \geq 1$.
Since $\Delta_{m,0}\bigl(\frac1r,{\theta})=2$ for any $m \in \mathbb{N}$, it follows from Theorem \ref{thm:34} that
\begin{align*}
\Delta_{n,0} \left( \frac{1}{r};U_{\theta} \right)
&= \sum_{j=0}^{n-1}{r^j} \Delta_{n-j,0} \left( \frac{1}{r};\theta \right)
-\left. \frac{2r^n}{y(1-y)} U_{\theta}(y) \right|_{y=\frac{1}{r}} \\
&=2 \sum_{j=0}^{n-1}{r^j}
-\frac{2r^n}{r-1}=-\frac{2}{r-1}.
\end{align*}
The proof is complete.
\end{proof}

Let $\alpha>0$.
Since $\theta \in C^2(\mathbb{R})$,
we have $\varepsilon \theta \in \mathit{SC}_{\alpha}$
if $\varepsilon>0$ is sufficiently small.
Also, it is easy to see that
$U_{\varepsilon \theta}$ is still nowhere differentiable and
$U_{\varepsilon \theta} \not \in \mathcal{P}$.
We thus obtain a function satisfying (B).

\begin{example}
Let us give an example of a function satisfying (C).
Define
\[ \psi(x)=|\sin (\pi x)|-\frac{1}{r} |\sin (\pi r x)| \in C_p(\mathbb{R}). \]
Then, by the definition of $U_{\psi}$, we easily see that
$U_{\psi}(x)=|\sin (\pi x)| \in C_p(\mathbb{R})$.
Thus $U_{\psi} \in C^{\infty}((0,1))$
and in particular $U_{\psi} \not \in \mathcal{P}$ as required in (C).

Let us next check that $\psi \in \mathit{SC}_{\alpha}$ for some $\alpha>0$.
The positivity of $\psi$ in $(0,1)$ follows
from straightforward calculation, and so we omit the proof.
Next, since functions
$\frac{1}{r} \sin (\pi r x)$ and $-\frac{1}{r} \sin (\pi r x)$
are semiconcave,
the minimum of them $-\frac{1}{r} |\sin (\pi r x)|$
is also semiconcave.
Therefore, $\psi$ being the sum of two semiconcave functions in $[0,1]$
is semiconcave in $[0,1]$.

Similarly to the previous example, for a given $\alpha>0$,
we have $\varepsilon \psi \in \mathit{SC}_{\alpha}$
if $\varepsilon>0$ is sufficiently small.
A function satisfying (C) has thus been obtained.
\end{example}

We conclude this section by studying
if a Weierstrass type function belongs $\mathcal{P}$.

\begin{example}
The famous Weierstrass function $W$ is given by
\[
W(x)=\sum_{j=0}^{\infty}a^j \rho(b^j x),\quad \rho(x)=\cos (\pi x),
\]
where $a \in (0,1)$ and $b$ is an odd integer
with $ab>1+\frac{3\pi}{2}$.
Note that $\rho$ is continuous and periodic on $\mathbb{R}$
with period $2$ and $\rho(0) \neq 0$.
Since we consider functions $\psi$ in $C_p(\mathbb{R})$
with $\psi(0)=0$ in this paper,
we study $U_{\eta}$ for
$\eta(x)=\sin (2\pi x) \in C_p(\mathbb{R})$
instead of $W$.
By Hardy \cite{Ha},
it is shown that $U_{\eta}$ is nowhere differentiable.
We also remark that
$\eta$ possesses a balance of convexity and concavity properties,
since it is concave on $[0,\frac12]$ and convex on $[\frac12,1]$.

We claim that $U_{\eta}$ does not belong to $\mathcal{P}$.
In fact, noting that
$\eta( \frac{r^j}{2})=\sin ( \pi r^j)=0$ for all $j \in \mathbb{N}_0$,
we see that
$U_{\eta}(\frac{1}{2})=0$ by the definition of $U_{\eta}$.
This implies that $U_{\eta} \not \in \mathcal{P}$
since, if $U_{\eta} \in \mathcal{P}$,
we have $U_\eta> 0$ in $(0,1)$ by \eqref{eqn:cyf}.
\end{example}

\section{The behavior of $\{ H_tf \}_{t>0}$ for $f\in \mathcal{P}$}
\label{sec:inf-convolution}

In this section
we consider the behavior of the Hamilton-Jacobi flow $\{H_tf \}_{t>0}$
for $f\in \mathcal{P}$,
where $H_tf$ is the function defined by \eqref{eqn:112}.
It is known that $H_t f$ belongs to $C_p({\mathbb R})$
and uniformly approximates $f$ as $t$ goes to $0$
(see \cite[Chapter 3.5]{CS}).
Also, $H_t f$ is a unique viscosity solution
of the initial value problem of the Hamilton--Jacobi equation:
\begin{equation}
 \begin{cases}
  u_t(t,x)+\dfrac{1}{2}\left(u_x(t,x)\right)^2=0,
  &(t,x) \in (0,\infty) \times \mathbb{R}, \\
  u(0,x)=f(x),
  & x \in \mathbb{R}
 \end{cases}
\label{eqn:113}
\end{equation}
(cf. \cite{CIL}).
Here, $u_t(t,x)=\frac{\partial u}{\partial t}(t,x)$
and $u_x(t,x)=\frac{\partial u}{\partial x}(t,x)$.

First of all,
we prove that the range of $z$ in \eqref{eqn:112} can be reduced.

\begin{lemma}
 \label{lem:y01}
 Let $f \in C_p(\mathbb{R})$. 
 If $f(z)\geq 0$ for all $z\in [0,1]$, then
\begin{equation}
 H_t f(x)=\min_{z \in [0,1]}q_f(t,x;z),
  \quad (t,x) \in (0,\infty) \times [0,1].
  \label{eqn:11r}
\end{equation}
\end{lemma}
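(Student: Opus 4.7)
The plan is to reduce the infimum over $\mathbb{R}$ to an infimum over $[0,1]$ using the periodicity of $f$ together with the nonnegativity assumption. Since $f$ is $1$-periodic with $f(0) = 0$, the hypothesis $f \geq 0$ on $[0,1]$ propagates to $f \geq 0$ on all of $\mathbb{R}$, and in particular $f(0) = f(1) = 0$. These two consequences are the only structural input I will need.

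Concretely, for $x \in [0,1]$ the two boundary parabolas evaluate to
\[
q_f(t,x;0) = \frac{x^2}{2t}, \qquad q_f(t,x;1) = \frac{(1-x)^2}{2t}.
\]
For $z > 1$ one has $z - x > 1 - x \geq 0$, whence $(z-x)^2 > (1-x)^2$, and using $f(z) \geq 0$ one obtains
\[
q_f(t,x;z) \geq \frac{(z-x)^2}{2t} > \frac{(1-x)^2}{2t} = q_f(t,x;1).
\]
An analogous comparison for $z < 0$, based on $x - z > x \geq 0$, yields $q_f(t,x;z) > q_f(t,x;0)$. Therefore, for every $z \in \mathbb{R} \setminus [0,1]$, at least one of the values $q_f(t,x;0)$, $q_f(t,x;1)$ is strictly smaller than $q_f(t,x;z)$, so
\[
\inf_{z \in \mathbb{R}} q_f(t,x;z) = \inf_{z \in [0,1]} q_f(t,x;z).
\]
Continuity of $q_f(t,x;\cdot)$ on the compact interval $[0,1]$ then forces the right-hand infimum to be attained, which is exactly \eqref{eqn:11r}.

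I do not anticipate any real obstacle: the only analytic input beyond the definition of $q_f$ is the nonnegativity of $f$ on all of $\mathbb{R}$, and periodicity is used merely to promote the hypothesis on $[0,1]$ to the entire line. The whole argument is a short comparison against the two boundary parabolas $q_f(t,x;0)$ and $q_f(t,x;1)$, so the value of $f$ at the endpoints $0$ and $1$ (vanishing there) together with $f \geq 0$ elsewhere handle everything.
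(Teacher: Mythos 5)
Your proof is correct and follows essentially the same route as the paper: the paper also compares $q_f(t,x;z)$ for $z<0$ against $q_f(t,x;0)$ and for $z>1$ against $q_f(t,x;1)$ using $f\geq 0$, merely phrasing the comparison as a ``geometrical investigation'' where you write out the explicit inequalities. No gap.
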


\begin{proof}
Fix $(t,x) \in (0,\infty) \times [0,1]$.
We first let $z<0$.
Since $f(z) \geq 0$,
the geometrical investigation implies that $q_f(t,x;z)> q_f(t,x;0)$.
Thus, the minimum in \eqref{eqn:112} is never attained for $z<0$.
The same arguments show that $z>1$ is not a minimizer of \eqref{eqn:112},
and hence \eqref{eqn:11r} holds.
\end{proof}




Now, we state the main result of this section.

\begin{theorem}\label{thm:25}
Let $f\in {\mathcal P}_c$ for $c>0$.
Then, the following holds:
\begin{itemize}
\item[{\Fb}$_c$]
			 For all $n\in \mathbb{N}_0$,
			 \begin{equation}
			  H_t f(x)
			   =\min_{k \in \{0,1,2,3,\dots,r^n\}}
			   q_f \left(t,x; \frac{k}{r^n} \right),\quad
			   (t,x) \in \left[ \frac{1}{2cr^n},\infty \right) \times [0,1].
			   \label{eqn:16}
			 \end{equation}
\end{itemize}
\end{theorem}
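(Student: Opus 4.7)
The plan is to reduce this statement to a partition-and-minimize argument using two tools already at hand: the equivalence between $\mathcal{P}_c$ and {\Fa}$_c$ from Theorem~\ref{thm:24}, and the reduction of the infimum in $H_t f$ to the unit interval from Lemma~\ref{lem:y01}.

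First I would verify that the hypothesis of Lemma~\ref{lem:y01} is satisfied, i.e.\ that $f\geq 0$ on $[0,1]$. This is immediate: $f\in \mathcal{P}_c$ gives $f(y)\geq cy(1-y)\geq 0$ on $(0,1)$ via \eqref{eqn:cyf}, while $f(0)=0$ by definition of $C_p(\mathbb{R})$ and $f(1)=f(0)=0$ by periodicity. Hence, for every $(t,x)\in(0,\infty)\times[0,1]$,
\[
H_t f(x)=\min_{z\in[0,1]}q_f(t,x;z).
\]

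Next, fix $n\in\mathbb{N}_0$ and $t\geq \frac{1}{2cr^n}$. Since $f\in\mathcal{P}_c$, Theorem~\ref{thm:24} guarantees that {\Fa}$_c$ holds, so for every $k\in\{0,1,\ldots,r^n-1\}$ and every $y\in(0,1)$ (the same $n$ and $t$ being fixed),
\[
q_f\!\left(t,x;\tfrac{k+y}{r^n}\right)\geq \min\!\left\{q_f\!\left(t,x;\tfrac{k}{r^n}\right),\,q_f\!\left(t,x;\tfrac{k+1}{r^n}\right)\right\},\qquad x\in\mathbb{R}.
\]
As $y$ ranges over $(0,1)$, the point $\frac{k+y}{r^n}$ sweeps out the entire open subinterval $\bigl(\frac{k}{r^n},\frac{k+1}{r^n}\bigr)$. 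Therefore the minimum of $z\mapsto q_f(t,x;z)$ over each closed subinterval $\bigl[\frac{k}{r^n},\frac{k+1}{r^n}\bigr]$ is attained at one of its endpoints.

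Finally I would assemble the conclusion. Since $[0,1]=\bigcup_{k=0}^{r^n-1}\bigl[\frac{k}{r^n},\frac{k+1}{r^n}\bigr]$, taking the minimum of the previous pointwise bound over all $z\in[0,1]$ yields
\[
\min_{z\in[0,1]}q_f(t,x;z)\geq \min_{k\in\{0,1,\ldots,r^n\}}q_f\!\left(t,x;\tfrac{k}{r^n}\right),
\]
and the reverse inequality is trivial because the grid points $\tfrac{k}{r^n}$ lie in $[0,1]$. Combined with the Lemma~\ref{lem:y01} reduction above, this gives \eqref{eqn:16}.

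There is no real obstacle: all the nontrivial content has been packed into {\Fa}$_c$ and Lemma~\ref{lem:y01}. The only points requiring a brief care are (a) checking the positivity hypothesis of Lemma~\ref{lem:y01} from the definition of $\mathcal{P}_c$, and (b) noting that (F1)$_c$ with quantification over $y\in(0,1)$ really does cover each open subinterval $(k/r^n,(k+1)/r^n)$, so that restricting the minimum to the endpoints $\{k/r^n\}_{k=0}^{r^n}$ is legitimate.
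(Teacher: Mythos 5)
Your proof is correct and follows essentially the same route as the paper's: the paper also deduces \eqref{eqn:16} directly from Lemma~\ref{lem:y01} (whose hypothesis is checked via \eqref{eqn:cyf}) together with {\Fa}$_c$ supplied by Theorem~\ref{thm:24}. You have merely written out the partition-and-endpoint-minimization step that the paper leaves implicit.
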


\begin{proof}
This is a consequence of \eqref{eqn:11r} and {\Fa}$_c$.
In fact, since $f\in {\mathcal P}_c$ satisfies
the inequality $f(z)\geq 0$ for $z\in [0,1]$ by \eqref{eqn:cyf},
we have \eqref{eqn:11r},
while Theorem \ref{thm:24} guarantees that {\Fa}$_c$ holds.
\end{proof}

By Theorem \ref{thm:25} we see that $H_t f$ with $f\in \mathcal{P}_c$
is a piecewise quadratic function in $[0,1]$ for all $t>0$
and that the $x$-coordinate of each vertex of the parabolas
making up $H_tf$ always belongs to $\mathbb{Q}_r$.
In general it is known that $H_t f$ for $f \in C_p(\mathbb{R})$
is $\frac{1}{2t}$-semiconcave in $[0,1]$ for all $t>0$.
For $f\in \mathcal{P}_c$ we deduce from \eqref{eqn:16} that
\[ H_t f(x)-\frac{x^2}{2t}
=\frac{1}{2t} \min_{k \in \{ 0,1,2,3,\dots,r^n \}}
\left[ -\frac{2k}{r^n} x+ \left( \frac{k}{r^n} \right)^2 + f \left( \frac{k}{r^n} \right) \right] \]
for $(t,x) \in [ \frac{1}{2cr^n},\infty) \times [0,1]$.
This shows that $H_t f(x)-\frac{x^2}{2t}$ is
not only concave but also piecewise linear in $[0,1]$.


One may ask if, conversely,
a function $f \in C_p(\mathbb{R})$ satisfying {\Fb}$_c$
for some $c>0$ is nowhere differentiable.
We have no complete answer to this question at the moment.
However, we can prove that such an $f$ is non-differentiable
on a dense subset of $\mathbb{R}$.
In general this is not enough to infer that it is nowhere differentiable,
as is shown by the Riemann function.
Indeed, let $R$ be the Riemann function defined by
\[ R(x)= \sum_{j=1}^{\infty} \frac{\sin (\pi j^2 x)}{j^2},
\quad x \in \mathbb{R}. \]
Set
\[ F:=\left\{ \left. \frac{2A+1}{2B+1} \, \right| \,
A, B \in \mathbb{Z} \right\} \ (\subset \mathbb{Q}). \]
By Hardy \cite{Ha} and Gerver \cite{G1,G2},
it is shown that $R$ is differentiable on the set $F$
and that $R$ is non-differetiable on the set
$(\mathbb{R} \setminus \mathbb{Q}) \cup (\mathbb{Q} \setminus F)$.


\begin{theorem}\label{thm:27}
Let $f \in C_p(\mathbb{R})$ 
and let $c>0$ be a constant.
Assume that {\Fb}$_c$ holds.
Then, there exists a dense subset of the interval $[0,1]$ such that
$f$ is non-differentiable at each point of this subset.
\end{theorem}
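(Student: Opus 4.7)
The plan is to perform a quantitative analysis of {\Fb}$_c$ at a (hypothetical) point of differentiability of $f$, and then to invoke the expanding dynamics of the shift map $T\colon[0,1)\to[0,1)$, $T(x)=\{rx\}$, where $\{\cdot\}$ denotes the fractional part. I will in fact prove the stronger statement that the set of differentiability points of $f$ in $[0,1]$ is \emph{countable}, which of course implies the density of its complement.

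Suppose $f$ is differentiable at $x_0\in[0,1]$ with $a:=f'(x_0)$, and set $t_n:=1/(2cr^n)$. The Moreau-envelope asymptotic
\[
H_{t_n}f(x_0)-f(x_0)=-\frac{a^{2}}{4cr^{n}}+o(1/r^{n})
\]
is standard at a differentiability point: the upper bound comes from testing $z=x_0-at_n$ in $\inf_z[f(z)+(x_0-z)^2/(2t_n)]$, and the matching lower bound from combining the linear approximation $|f(z)-f(x_0)-a(z-x_0)|\le\varepsilon|z-x_0|$ on a neighbourhood of $x_0$ with a localization of the infimum near $x_0$, the boundedness and periodicity of $f$ ruling out far-away minimizers. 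On the other hand, {\Fb}$_c$ gives $H_{t_n}f(x_0)=P_{k_n^{*}}(x_0)$ for some minimizer $k_n^{*}\in\{0,\ldots,r^n\}$, where $P_k(x):=f(k/r^n)+cr^n(x-k/r^n)^{2}$. Setting $\delta_n:=x_0-k_n^{*}/r^n$ and using the expansion $f(k_n^{*}/r^n)=f(x_0)-a\delta_n+o(\delta_n)$, one obtains
\[
H_{t_n}f(x_0)-f(x_0)=-a\delta_n+cr^n\delta_n^{\,2}+o(\delta_n).
\]
A routine comparison of $P_{k_n^{*}}(x_0)$ with the parabola centred at the grid point nearest to $x_0$, using the local Lipschitz property at $x_0$ inherited from differentiability, yields the a priori bound $\delta_n=O(1/r^n)$, so that $o(\delta_n)=o(1/r^n)$. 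Equating the two expressions for $H_{t_n}f(x_0)-f(x_0)$ and completing the square then gives
\[
cr^n\bigl(\delta_n-\tfrac{a}{2cr^n}\bigr)^{2}=o(1/r^n),
\]
which forces $r^n\delta_n\to a/(2c)$. Since $r^n\delta_n=r^nx_0-k_n^{*}$ with $k_n^{*}$ an integer, this is equivalent to $\{r^nx_0\}\to\{a/(2c)\}$ in $\mathbb{R}/\mathbb{Z}$; equivalently, the $T$-orbit of $x_0$ converges.

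Only countably many $x_0\in[0,1]$ can have a convergent $T$-orbit. Indeed, the fixed points of $T$ in $[0,1)$ are the $r-1$ rationals $L_k=k/(r-1)$ for $k=0,1,\ldots,r-2$; each lies in the interior of an interval on which $T$ is linear of slope $r$, so $T$ is continuous at every $L_k$. Hence any limit $L=\lim_n T^n(x_0)$ must satisfy $T(L)=L$, so $L=L_k$ for some $k$; the expansion identity $|T(x)-L|=r|x-L|$ valid near $L$ moreover rules out approach to $L$ without actually reaching it, so $T^{n_0}(x_0)=L_k$ for some $n_0\ge0$. The set of $x_0$ with convergent orbit is therefore $\bigcup_{k}\bigcup_{n_0\ge0}T^{-n_0}(L_k)$, a countable union of finite sets (each $|T^{-n_0}(L_k)|=r^{n_0}$), hence countable. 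Combined with the previous paragraph, this countable set contains every differentiability point of $f$, so $f$ is non-differentiable on a cocountable, in particular dense, subset of $[0,1]$.

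The main obstacle is the sharp estimate $cr^n(\delta_n-a/(2cr^n))^2=o(1/r^n)$: both the Moreau-envelope asymptotic and the local expansion of $f$ carry $o(\cdot)$ remainders which must be controlled simultaneously, and one must first extract the a priori bound $\delta_n=O(1/r^n)$ in order to upgrade the error $o(\delta_n)$ to $o(1/r^n)$. A secondary delicate step is the lower bound in the envelope asymptotic, where the infimum has to be restricted to a neighbourhood of $x_0$ by exploiting the bounded oscillation of the periodic function $f$.
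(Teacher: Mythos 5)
Your argument is correct (the sketched steps --- the Moreau-envelope asymptotic $H_tf(x_0)-f(x_0)=-\tfrac{a^2t}{2}+o(t)$ at a differentiability point, the a priori bound $\delta_n=O(r^{-n})$ obtained by testing the nearest grid point, and the escape-from-fixed-points dynamics --- all check out), but it is a genuinely different route from the paper's. The paper argues softly and locally: since $H_tf$ is a finite minimum of parabolas with vertices in $\mathbb{Q}_r$, on a small interval $J$ to the left of any $x_0$ it coincides with a single parabola $q_f(t,\cdot\,;z)$ whose vertex $z$ lies within $\varepsilon$ of $x_0$ (via the oscillation bound $(x_0-z)^2\le 2Mt$), and then the subdifferential identity $\frac{x-z}{t}\in D^-f(z)$ for all $x\in J$ shows $D^-f(z)$ contains a nondegenerate interval, so $f$ is not differentiable at $z$; this yields a dense set of non-differentiability points inside $\mathbb{Q}_r$ in a few lines. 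Your proof instead performs a quantitative expansion at a hypothetical differentiability point, pins down the location of the minimizing vertex to $r^n x_0-k_n^*\to f'(x_0)/(2c)$, and converts this into the dynamical rigidity statement that the orbit of $x_0$ under $x\mapsto rx \bmod 1$ must eventually land on a fixed point $k/(r-1)$. What you buy is a strictly stronger conclusion --- the differentiability set is contained in the countable set $\bigcup_{k,n_0}T^{-n_0}(k/(r-1))$, so non-differentiability holds on a cocountable set --- at the cost of a longer and more delicate error analysis; the paper buys brevity by importing the viscosity-solution machinery of \cite{BCD}. One small presentational point: you justify $T(L)=L$ by continuity of $T$ at the points $L_k$, which is circular as stated on $[0,1)$; it is cleaner to note that $x\mapsto rx$ is continuous everywhere on $\mathbb{R}/\mathbb{Z}$, where your convergence statement already lives, so the limit of any convergent orbit is automatically fixed.
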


We denote by $D^- f(x)$ the \textit{subdifferential} of $f$ at $x$, that is,
the set of $\phi'(x)$ such that $\phi \in C^1$ near $x$ and
$f-\phi$ has a local minimum at $x$.
We list basic properties of the subdifferential
used in the proof of Theorem \ref{thm:27}.
Let $f \in C_p(\mathbb{R})$ and $x \in \mathbb{R}$.

\begin{enumerate}[label=(\Roman*),leftmargin=*]
\item\label{item:thm27-i}
If $f$ is differentiable at $x$, then $D^- f(x)=\{ f'(x) \}$
(\cite[Lemma II.1.8-(b)]{BCD});

\item\label{item:thm27-ii}
Let $t>0$ and choose $z \in \mathbb{R}$ such that $H_t f(x)=q_f(t,x;z)$.
Then $\frac{x-z}{t} \in D^- f(z)$ (\cite[Lemma II.4.12-(iii)]{BCD}).
\end{enumerate}

\begin{proof}[Proof of Theorem \ref{thm:27}]
Fix $x_0 \in (0,1)$ and $\varepsilon>0$,
and let $I=(x_0-\varepsilon,x_0+\varepsilon)$.
We prove that there is some $z \in I$
such that $f$ is not differentiable at $z$.
We may assume that $\varepsilon < \min\{ x_0, \, 1-x_0 \}$,
so that $I \subset [0,1]$.
Let $t \in (0, \frac{\varepsilon^2}{2M})$,
with $M>0$ the oscillation of $f$, that is,
$M=\sup_{\mathbb{R}} f-\inf_{\mathbb{R}} f$.
Since $H_t f$ is represented by \eqref{eqn:16}
with $n$ such that $t \geq \frac{1}{2cr^n}$,
there exists some $\delta \in (0,\varepsilon)$ such that
$H_t f=q_f (t,\cdot ;z)$ in $J:=[x_0-\delta,x_0] \subset I$
with $z=\frac{k}{r^n}$ for some $k \in \{ 0,1,2,3, \dots, r^n \}$.
The choice of $t$ then guarantees that $z \in I$.
Indeed, we have
\[
 f(x_0) \geq H_t f (x_0)=f(z)+\frac{1}{2t}(x_0-z)^2,
\]
and hence $(x_0-z)^2 \leq 2 t (f(x_0)-f(z))\leq 2Mt<\varepsilon^2$,
that is, $z \in I$.

It follows from \ref{item:thm27-ii} that
$\frac{x-z}{t} \in D^- f(z)$ for all $x \in J$.
This implies that
$[\frac{x_0-\delta-z}{t},\frac{x_0-z}{t}] \subset D^- f(z)$:
that is, $D^- f(z)$ is not a singleton.
Hence we conclude by \ref{item:thm27-i} that $f$ is not differentiable
 at $z$.
\end{proof}

\begin{remark}
The above proof actually shows that
the dense set we found is a subset of $\mathbb{Q}_r$.
\end{remark}

\section{Concluding remark}
\label{sec:concluding-remark}

We conclude this paper
by mentioning another possible definition of $\mathcal{P}_c$.
Let us define $\mathcal{P}'_c$
as the set of all $f \in C_p(\mathbb{R})$ such that
there exists an infinite subset $\mathbb{N}' \subset \mathbb{N}_0$
such that $f$ satisfies \eqref{eqn:23e} for all $(n,k,y) \in \nky$
with $n \in \mathbb{N}'$.
In other words, we require \eqref{eqn:23e}
only for some subsequence of $n \in \mathbb{N}_0$.
Even if this generalized class $\mathcal{P}'_c$ is used,
one can easily see that
Theorem \ref{thm:24} is obtained in a suitable sense.
Namely, $f \in \mathcal{P}'_c$ if and only if
$f$ satisfies {\Fa}$_c$ with ``For all $n \in \mathbb{N}'$''
instead of ``For all $n \in \mathbb{N}_0$''.
The proof is almost the same as before.

Moreover, Theorem \ref{thm:31} is true for a function in
$\mathcal{P}':=\bigcup_{c>0} \mathcal{P}'_c$
since the proof still works when taking the limit along $\mathbb{N}'$.
The formula \eqref{eqn:31} still gives many examples of functions in $\mathcal{P}'$.
Though $\mathcal{P}'$ provides a more general class than does $\mathcal{P}$,
there are, however, no essential changes or difficulties in the proofs.
For this reason, for simplicity of presentation,
the authors decided to give results in this paper
for $\mathcal{P}_c$ instead of $\mathcal{P}'_c$.

\section*{Acknowledgement}
Antonio Siconolfi appreciates funding for selected research
from the Faculty of Science, University of Toyama.
It enabled him to visit the University of Toyama in March, 2018.


\begin{thebibliography}{99}

\bibitem{AK}{\sc P.~Allaart and K.~Kawamura}, {\it The Takagi function: a survey}, Real Anal. Exchange  37  (2011/12),  1--54.


\bibitem{BCD}{\sc M.~Bardi and I.~Capuzzo-Dolcetta}, {\rm
Optimal control and viscosity solutions of Hamilton-Jacobi-Bellman equations},
Systems \& Control: Foundations \& Applications. Birkh\"auser Boston, Inc., Boston, MA, 1997


\bibitem{Bo}{\sc Z.~Boros}, {\it An inequality for the Takagi function}, Math. Inequal. Appl., 11 (2008), 757--765.

\bibitem{CS}{\sc P.~Cannarsa and C.~Sinestrari}, {\rm Semiconcave functions, Hamilton-Jacobi equations, and optimal control},
Progress in Nonlinear Differential Equations and their Applications, 58. Birkh{\"a}user Boston, Inc., Boston, MA, 2004.

\bibitem{Ca}{\sc F.~S.~Cater}, {\it Constructing nowhere differentiable functions from convex functions},  Real Anal. Exchange  28  (2002/2003),  617--621.

\bibitem{CIL}{\sc M.~Crandall, H.~Ishii and P.~L.~Lions}, {\it User's guide to viscosity solutions of second order partial differential equations},  Bull. Amer. Math. Soc. 27  (1992), 1--67.

\bibitem{FHY}{\sc Y.~Fujita, N.~Hamamuki and N.~Yamaguchi},
{\it A Hamilton--Jacobi flow starting from the Takagi function},
in preparation.

\bibitem{FS}{\sc Y.~Fujita and Y.~Saito}, {\it On the sets of maximum points for generalized Takagi functions}, Toyama Math. J., 39 (2017), 85-93.

\bibitem{G1}{\sc J.~Gerver}, {\it The differentiability of the Riemann function at certain rational multiples of $\pi$}, Amer. J. Math.  92 (1970) 33-55.

\bibitem{G2}{\sc J.~Gerver}, {\it More on the differentiability of the Riemann function}, Amer. J. Math.  93 (1971) 33-41.


\bibitem{Ha}{\sc G.~H.~Hardy}, {\it Weierstrass's non-differentiable functions}, Trans. of A.M.S.,  17  (1916), 301--325.

\bibitem{HY}{\sc M.~Hata and M.~Yamaguti}, {\it The Takagi function and its generalization},
Japan J. Appl. Math.  1  (1984),  183--199.

\bibitem{HP}{\sc A.~H\'azy and Z.~P\'ales}, {\it On approximately midconvex functions},
Bull. London Math. Soc.  36  (2004), 339-350.

\bibitem{He}{\sc Y.~Heurteaux}, {\it Weierstrass functions in Zygmund's class}, Proc. Amer. Math. Soc.  133  (2005),   2711--2720.

\bibitem{L}{\sc J.~C.~Lagarias}, {\it
The Takagi function and its properties},
RIMS k\^oky\^uroku Bessatsu,  B34  (2012), 153--189.

\bibitem{M}{\sc J.~Mak\'o}, {\it
A new proof of the approximate convexity of the Takagi function},
Acta Math. Hungar., 151 (2017), 456--461.

\bibitem{T}{\sc T.~Takagi}, {\it A simple example of the continuous function without derivative}, Phys.-Math.
Soc. Japan 1 (1903), 176--177. The Collected Papers of Teiji Takagi, S. Kuroda, Ed., Iwanami
(1973), 5--6.

\bibitem{To}{\sc J.~R.~Trollope
}, {\em An explicit expression for binary digital sums},
Math. Mag.  41 (1968), 21--25.

\end{thebibliography}
\end{document}